\newcommand{\dmo}[2][]{%
 \expandafter\DeclareMathOperator\csname#2\endcsname%
  {\ifthenelse{\isempty{#1}}%
    {#2}% if #1 is empty
    {#1}% if #1 is not empty#1}%
  }%
}
\theoremstyle{plain}
\newtheorem{thm}{Theorem}[section]
\newtheorem{thmAbs}{Theorem}
\newtheorem{theorem}[thmAbs]{Theorem}
\newtheorem{lemma}[thm]{Lemma}
\newtheorem{corollary}[thmAbs]{Corollary}
\newtheorem{proposition}[thm]{Proposition}
\newtheorem{propositionAbs}[thmAbs]{Proposition}
\theoremstyle{definition}
\newcommand{\R}{\mathbb{R}}
\newcommand{\Z}{\mathbb{Z}}
\newcommand{\Q}{\mathbb{Q}}
\newcommand{\C}{\mathbb{C}}
  \renewcommand{\arraystretch}{1.2}
\title{Hyperbolic Heron Triangles and Elliptic Curves}
\author{Matilde Lalín}
\address{Université de Montréal,
  Pavillon André-Aisenstadt,
  Département de mathématiques et de statistique,
  CP 6128,
  succ.\ Centre-ville, 
	Montréal, Québec, H3C~3J7, Canada}
  \email{mlalin@dms.umontreal.ca, \textit{\fontfamily{\familydefault}\selectfont Web: }%
  dms.umontreal.ca/\textasciitilde mlalin}
\author{Olivier Mila}
\address{Centre de recherches mathématiques,
	Université de Montréal,
  Pavillon André-Aisenstadt,
  2920 Chemin de la tour,
	Montréal, Québec, H3T~1J4, Canada}
  \email{olivier.mila@umontreal.ca, \textit{\fontfamily{\familydefault}\selectfont Web: }crm.umontreal.ca/\textasciitilde mila}
\begin{document}
\begin{abstract}
    We define hyperbolic Heron triangles (hyperbolic triangles with ``rational'' side-lengths and area)
    and parametrize them in two ways as rational points of certain elliptic curves.
     We show that there are infinitely many hyperbolic Heron 
    triangles with one angle $\alpha$ and area $A$ for any (admissible) choice of $\alpha$ and $A$; 
    in particular, the congruent number problem has always infinitely many solutions in 
    the hyperbolic setting. 
    We also explore the question of hyperbolic triangles with a rational median and 
    a rational area bisector (median splitting the triangle in half).
    %introduce the concept of area bisector (median splitting the triangle in half).
\end{abstract}
\thanks{This work is supported by the Swiss National Science Foundation, Project number \texttt{P2BEP2\_188144}, by the Natural Sciences and Engineering Research Council of Canada, Discovery Grant \texttt{355412-2013}, and by the Fonds de recherche du Qu\'ebec - Nature et technologies, Projet de recherche en \'equipe \texttt{256442}}
\maketitle
\section*{} \label{sec:intro}

The problem of finding triangles with rational area and side lengths in the Euclidean plane goes at least 
as far back as $\sim$ 600 A.D with the Indian mathematician Brahmagupta (see 
\cite{Goins-Maddox}).
If the triangle is assumed right, this is the classical congruent number problem (a number is congruent if 
it is the area of a right triangle with rational sides). 
Remarkably, this problem is equivalent to finding (non-torsion) rational solutions to the elliptic curve 
$y^2 = x^3 - n^2 x$.
For non-right triangles, it was shown by Goins and Maddox \cite{Goins-Maddox} that Heron triangles are in correspondence 
with rational points on the curve $y^2 = x(x- n\tau)(x + n\tau^{-1})$, where $n$ denotes the area and 
$\tau$ is the tangent of half of an angle.

In this paper we investigate the analog problem in the hyperbolic plane.
The first concept we need to transport is that of rationality.
Unlike the Euclidean case where trigonometric laws are polynomial in the area, side lengths and sine and 
cosine of the angles, their hyperbolic counter part involve the \emph{hyperbolic} sine and cosine of the 
side lengths, and the sine and cosine of the area.
For instance, in a triangle with side lengths $a,b,c$ and a right angle opposing side $a$, 
Pythagoras' Theorem and the area $A$ are given by:
\[
    \cosh(a) = \cosh(b)\cosh(c) \text{ and } 
    \sin(A) = \frac{\sinh(b)\sinh(c)}{\cosh(a)+1}
\]
respectively.
It is thus natural to ask that the sine/cosine of the angles be rational, and similarly for the hyperbolic 
functions of the sides, instead of directly asking that these quantities be rational.

For the rationality of the hyperbolic functions on the side lengths, at least two conventions have been 
used in the literature.
One such choice was made by Brody and Schettler~\cite{Brody-Schettler}. 
They call a triangle \emph{rational} if the hyperbolic tangent of its side lengths are rational, and 
they use this definition to prove a correspondence between rational triangles of semi-perimeter 
$\tanh^{-1}(\sigma)$ and inradius $\sinh^{-1}(\rho)$ and rational points on the curve 
$\sigma(x^2 y^2 + xy + \rho^2 (x^2 + xy + y^2 - 1)) = (1 + \rho^2 )(x^2 y + xy^2 )$.

A second (stronger) choice is that of Hartshorne and van Luijk \cite{Hartshorne-vanLuijk}.
Here they call a length $x$ \emph{rational} if $e^x \in \Q$, and then study Pythagorean triples in this context.
This is the notion we will adopt in this paper.

For the area, the Gauss--Bonnet theorem implies that a triangle with angles $\alpha, \beta, \gamma$ has 
area 
\[
A = \pi - \alpha - \beta - \gamma.
\]
The previous discussion together with the above formula for the area suggest a common definition of rationality for angles and area: we
call an area $A$ and an angle $\alpha$ rational if the sine and cosine of these quantities are rational (or 
equivalently if $e^{iA}, e^{i\alpha} \in \Q[i]$).
Thus a hyperbolic triangle with area $A$, angles $\alpha, \beta, \gamma$ and side lengths 
$a,b,c$ is a \emph{hyperbolic Heron triangle} if
\[
    e^a, e^b, e^c \in \Q \quad \text{and} \quad e^{i\alpha}, e^{i\beta}, e^{i\gamma}, e^{iA} \in \Q[i].
\]

By the well known circle parametrization, $e^{ix} \in \Q[i]$ if and only if 
$\cos(x) = \frac{1 - t^2}{1 + t^2}$ and $\sin(x) = \frac{2t}{1+t^2}$ for some $t \in \Q$.
By abuse of terminology we will call $t$ the \emph{rational angle} (resp.\ \emph{rational area}) of a 
hyperbolic triangle if its angle (resp.\ area) is $x$.

Our main result is the following:
\begin{theorem}
    \label{th:heron-angles}
    There is a one-to-one correspondence between hyperbolic Heron triangles having rational area $m$ 
    and one rational angle $u$, and rational points satisfying an open condition \eqref{eq:cond-A}
    (to be described later)  on the curve:
    \[
  y^2 = x  (x - n)  (x - n (u^2 + 1)), \quad \text{where } n = m (m^2 + 1) (2u - m(u^2 - 1)).
    \]
\end{theorem}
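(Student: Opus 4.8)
The plan is to repackage the defining conditions as the single requirement that an explicit quantity be a rational square, and then to diagonalize that requirement into the displayed Weierstrass curve. I would work with the classical identities for a hyperbolic triangle with sides $a,b,c$ opposite angles $\alpha,\beta,\gamma$: Gauss--Bonnet $A=\pi-\alpha-\beta-\gamma$, the dual law of cosines $\cosh a=(\cos\alpha+\cos\beta\cos\gamma)/(\sin\beta\sin\gamma)$ together with its cyclic permutations, and the law of sines $\sinh a/\sin\alpha=\sinh b/\sin\beta=\sinh c/\sin\gamma$. Write $u,v,w$ for the rational angles of $\alpha,\beta,\gamma$ and $m$ for the rational area, so that $\cos\alpha=(1-u^2)/(1+u^2)$, $\sin\alpha=2u/(1+u^2)$, and so on; the half-angle identity $\tfrac\alpha2+\tfrac\beta2+\tfrac\gamma2+\tfrac A2=\tfrac\pi2$ then becomes one rational (M\"obius) relation among $u,v,w,m$, which I solve for $w$ as a rational function of $v$. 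By the dual law of cosines $\cosh a,\cosh b,\cosh c$ are automatically rational; since $\sinh^2a=\cosh^2a-1$ and the law of sines links the three hyperbolic sines by rational factors, the only surviving condition is that
\[
  \Delta:=\cos^2\alpha+\cos^2\beta+\cos^2\gamma+2\cos\alpha\cos\beta\cos\gamma-1
\]
be a square in $\Q$; indeed $\Delta=\sinh^2a\,\sin^2\beta\,\sin^2\gamma$, so $\Delta\in\Q^{2}$ is precisely the assertion that $e^a,e^b,e^c\in\Q$ all at once.

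Next I would exploit a Heron-type factorization: substituting $\gamma=\pi-A-\alpha-\beta$ and using sum-to-product formulas yields
\[
  \Delta=4\sin\tfrac A2\,\sin\!\Big(\alpha+\tfrac A2\Big)\sin\!\Big(\beta+\tfrac A2\Big)\sin\!\Big(\gamma+\tfrac A2\Big),
\]
the mirror of Heron's formula with the roles of the area and the angles interchanged. Using $\sin\tfrac A2=m/\sqrt{1+m^2}$ and $\cos\tfrac A2=1/\sqrt{1+m^2}$ one gets $\Delta=4m\prod_{\theta}(\sin\theta+m\cos\theta)/(1+m^2)^2$ with $\theta$ ranging over $\alpha,\beta,\gamma$; discarding the rational square $(1+m^2)^2$ and the factor $m(1+u^2)(\sin\alpha+m\cos\alpha)=n/(m^2+1)$, the condition $\Delta\in\Q^{2}$ is equivalent to
\[
  n\,(m^2+1)(1+u^2)(\sin\beta+m\cos\beta)(\sin\gamma+m\cos\gamma)\in\Q^{2},
\]
where $n=m(m^2+1)(2u-m(u^2-1))$ is exactly the quantity in the statement.

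Now substitute the rational function $w=w(v)$ and clear denominators: the left-hand side becomes, up to a rational square, a quartic $g(v)$, and a short computation (keeping track of $1\pm k^2$ for $k=(u+m)/(1-um)$) gives $g(0)=n^2$, so $(v,y)=(0,\pm n)$ is a rational point on $y^2=g(v)$. From this quartic with the distinguished rational point $(0,n)$ I would pass to Weierstrass form by the classical recipe; since $g$ factors over $\Q$ into two quadratics, the three ways of pairing its four roots give three rational $2$-torsion points, which after normalization I expect to match $x\in\{0,n,n(u^2+1)\}$, identifying the model with $y^2=x(x-n)(x-n(u^2+1))$.

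Finally I would trace the degenerate configurations through this change of coordinates: $\Delta=0$ (i.e.\ $\beta$ or $\gamma$ congruent to $-\tfrac A2$ modulo $\pi$, and $\beta\in\{0,\pi\}$) together with the positivity constraints $u,v,w>0$, $0<A<\pi$ cut out a finite bad set, whose complement --- described by the resulting inequalities and non-vanishing conditions --- is the open set \eqref{eq:cond-A}; the hyperbolic AAA theorem then ensures that every admissible angle datum is realized by a genuine triangle, so the correspondence is a bijection. The main obstacle is this last stretch: producing the explicit substitution that brings the unwieldy $g(v)$ to the clean Weierstrass equation, and verifying that the resulting map is an honest bijection --- pinning down \eqref{eq:cond-A} and controlling the behaviour at the exceptional points, rather than settling for a mere birational equivalence.
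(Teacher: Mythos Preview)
Your plan is sound and follows the same overall arc as the paper: reduce the Heron property to the single requirement that $\Delta=\Delta_1^2\in\Q^{2}$, turn this into a quartic in the free angle parameter, and pass to the Weierstrass model with full rational $2$-torsion. The one genuine difference is how you reach the quartic. The paper simply expands $2\Delta_1^2$ via trigonometric identities, substitutes the rational-angle parametrizations, and lands on the quartic \eqref{eq:w-t-angles} by brute force. Your route through the factorization
\[
  \Delta \;=\; 4\sin\tfrac{A}{2}\,\sin\!\Big(\alpha+\tfrac{A}{2}\Big)\sin\!\Big(\beta+\tfrac{A}{2}\Big)\sin\!\Big(\gamma+\tfrac{A}{2}\Big)
\]
is more conceptual: it explains \emph{why} the quartic factors over $\Q$ into two quadratics (one from the $\beta$-factor, one from the $\gamma$-factor after the M\"obius substitution for $w$), and hence why the target curve has all three $2$-torsion points rational. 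This is an improvement in clarity over the paper's computation, at no cost in rigor.

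Where the paper goes further than your sketch is precisely the ``last stretch'' you flag. The paper writes out the explicit birational change of variables (and its inverse) between the quartic and the Weierstrass curve, and then works out condition \eqref{eq:cond-A} concretely as $0<t<\frac{1-mu}{m+u}$, derived from a table translating positivity of the angles and $A+\beta<\pi$ into inequalities on $m,u,t$. It also analyses the three points of the Weierstrass curve lying on the line where the inverse substitution blows up, and shows that exactly one of them (namely $P$) has a preimage on the quartic, so that two exceptional points must be explicitly excluded from the correspondence. Your proposal anticipates all of this in outline but does not carry it out; that bookkeeping is indeed where most of the remaining work lies.
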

%{\color{red}TODO: explain "almost".} \mcom{I'm not sure what you mean by almost in this Theorem, to be honest} \mcomanother{I also have a question about this. Once we have a solution with $m$ and $u$, we get infinitely many solutions by changing the $t$, don't we?}
This open condition \eqref{eq:cond-A} encodes the fact that the angles are positive and that the area is $<\pi$, and it also excludes two further points on the curve.
Computing the rank of the above curve (and proving it is $\geq 1$) we get:
%A computation of the rank of the above curve gives:
\begin{corollary}
    \label{cor:heron-angles-rank1}
    For almost all $m,u \in \Q$ with $m,u >0$ and $mu < 1$ there are infinitely many hyperbolic Heron triangles
    with rational area $m$ and one rational angle $u$.
\end{corollary}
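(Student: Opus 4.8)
The plan is to combine Theorem~\ref{th:heron-angles} with a lower bound for the Mordell--Weil rank of the curve $E_{m,u}$ given by
\[
    y^2 = x(x-n)(x-n(u^2+1)), \quad \text{where } n = m(m^2+1)\bigl(2u-m(u^2-1)\bigr),
\]
together with a topological density argument that disposes of the open condition~\eqref{eq:cond-A}. By the bijection of Theorem~\ref{th:heron-angles} it is enough to show that, for all admissible pairs $(m,u)$ (that is, $m,u>0$ and $mu<1$) outside a thin exceptional set, the rational points of $E_{m,u}$ satisfying~\eqref{eq:cond-A} form an infinite set.

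First I would show that $\rk E_{m,u}(\Q)\ge 1$ already suffices. For admissible $(m,u)$ one checks that $2u-m(u^2-1)>0$ --- this is where $mu<1$ enters --- so $n>0$ and the cubic on the right has three real roots $0<n<n(u^2+1)$; hence $E_{m,u}(\R)$ has two connected components, the bounded oval meeting the $x$-axis at $x=0$ and $x=n$. Now $(0,0)$ always lies in $E_{m,u}(\Q)$ and on this bounded oval, i.e.\ on the non-identity component. Since the closure, in the compact real group $E_{m,u}(\R)$, of any infinite finitely generated subgroup must contain the identity component, a subgroup of rank $\ge 1$ containing $(0,0)$ is dense in all of $E_{m,u}(\R)$. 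As~\eqref{eq:cond-A} cuts out a non-empty open subset of $E_{m,u}(\R)$ --- non-emptiness being the existence of a real hyperbolic triangle with the prescribed angle and area, which holds for every admissible pair --- this subset contains infinitely many rational points; removing the two further excluded points still leaves infinitely many, and under the correspondence of Theorem~\ref{th:heron-angles} they yield infinitely many pairwise non-congruent hyperbolic Heron triangles.

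It remains to establish $\rk E_{m,u}(\Q)\ge 1$ for almost all admissible $(m,u)$. I would exhibit an explicit rational point $P_{m,u}\in E_{m,u}(\Q)$ --- found either by feeding a one-parameter family of hyperbolic Heron triangles through the correspondence of Theorem~\ref{th:heron-angles}, or by directly locating a value $x_0\in\Q(m,u)$ for which the cubic becomes a square --- and then show it has infinite order. For the latter it is enough to produce a single numerical specialization $(m_0,u_0)$ at which $P_{m_0,u_0}$ is non-torsion in $E_{m_0,u_0}(\Q)$, checked by reduction modulo two primes of good reduction or by Nagell--Lutz; this forces $P_{m,u}$ to be non-torsion over the function field $\Q(m,u)$, and then Silverman's specialization theorem (in a multivariate version, or applied one variable at a time) gives that $P_{m,u}$ remains non-torsion, hence $\rk E_{m,u}(\Q)\ge1$, for all $(m,u)$ outside a thin set. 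Since $n\ne0$ for every admissible pair, there are no further fibres to discard. (As $E_{m,u}$ has full rational $2$-torsion, an explicit $2$-descent is an alternative route to the lower bound on the rank.)

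The main obstacle is producing the explicit point $P_{m,u}$: either a simple rational function of $m$ and $u$ luckily makes the cubic a square, or one must single out a tractable sub-family of hyperbolic Heron triangles, push it through the parametrization, and simplify the resulting coordinates enough to carry out the specialization check. Once a clean point is in hand, the non-torsion verification and the passage from ``generic'' to ``almost all'' via Silverman's theorem are routine. A last, easy but necessary, point to settle is that the region~\eqref{eq:cond-A} is non-empty for every admissible $(m,u)$, which follows because $m,u>0$ and $mu<1$ force the existence of a real hyperbolic triangle with the prescribed angle and area.
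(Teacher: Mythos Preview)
Your strategy is exactly the paper's: exhibit a rational point of infinite order on $E_{m,u}$ over the function field, specialize to get rank $\ge 1$ for almost all $(m,u)$, and then invoke density of $E_{m,u}(\Q)$ in $E_{m,u}(\R)$ (the paper cites this as a theorem of Poincar\'e--Hurwitz, using that the three $2$-torsion points $(0,0),(n,0),(n(u^2+1),0)$ are rational and hit both real components). Your density argument via closure of an infinite subgroup containing $(0,0)$ is an equivalent formulation.

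The one piece you flag as the ``main obstacle'' is in fact resolved in the paper: the explicit point is
\[
    P(m,u) = \bigl((m^2+1)(m+u)^2,\; u^2(m^2+1)^2(m+u)(mu-1)\bigr),
\]
which one checks lies on $E_{m,u}$ and has infinite order (the paper verifies $h(P)=2$ on the generic fibre). With this point in hand, your proposed route via a single numerical specialization plus Silverman's theorem works; the paper instead appeals to the height computation over $\C(m)$ and $\C(u)$ together with the determination of the generic torsion as $(\Z/2\Z)^2$, which yields the ``almost all'' with the exceptional locus described after the statement of the corollary.
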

Here ``almost'' means all values except possibly those lying on finitely many curves $\{f_i(m,u) = 0\}$ of 
with $\deg(f_i) \geq 8$.
Setting $u=1$ (which corresponds to the case of one right angle),  
it is easily verified that the $f_i(m,1)$ have no rational solution.
Thus the congruent number problem always has infinitely many solutions in the hyperbolic setting.

One can also parametrize hyperbolic Heron triangles using side lengths:
\begin{theorem}
    \label{th:heron-sides}
    There is a one-to-one correspondence between hyperbolic Heron triangles \break having two sides of lengths
    $\log(v)$ and $\log(w)$ with $v,w \in \Q$, and rational points satisfying an open 
    condition \eqref{eq:cond-B} (to be described later) on the curve:
    \[
        y^2 = x\left(x-(v-v^{-1})^2\right) \left(x-(w-w^{-1})^2\right).
    \]
\end{theorem}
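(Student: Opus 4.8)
The plan is to reduce the defining conditions of a hyperbolic Heron triangle with two prescribed sides to a single quartic equation, and then to bring that quartic into the stated Weierstrass form (we may assume $v\neq w$, so that the curve is nonsingular). Write $a=\log v$ and $b=\log w$ for the two prescribed sides, let $c$ be the third side, let $\gamma$ be the angle opposite $c$, and set $z=e^{c}$, so that $\cosh a=\tfrac12(v+v^{-1})$, $\sinh a=\tfrac12(v-v^{-1})$, and likewise for $b,c$ in terms of $w,z$. The first observation is that the six rationality requirements in the definition of a hyperbolic Heron triangle collapse to the two conditions $z\in\Q$ and $\sin\gamma\in\Q$ (equivalently, $\gamma$ is a rational angle): by the hyperbolic law of cosines $\cos\gamma=(\cosh a\cosh b-\cosh c)/(\sinh a\sinh b)$, so $z\in\Q$ already forces $\cos\gamma\in\Q$; the same law gives $\cos\alpha,\cos\beta\in\Q$ once $\cosh a,\cosh b,\cosh c\in\Q$; the hyperbolic law of sines gives $\sin\alpha=\sinh a\,\sin\gamma/\sinh c$ and $\sin\beta=\sinh b\,\sin\gamma/\sinh c$; and $e^{iA}=-e^{-i\alpha}e^{-i\beta}e^{-i\gamma}$ by Gauss--Bonnet. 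So I would first record this equivalence.

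Next I would convert ``$\sin^{2}\gamma$ is a rational square'' into a curve. Writing $\cosh c=\tfrac12(z+z^{-1})$ and using the factorisations $z^{2}-(v/w+w/v)z+1=(z-v/w)(z-w/v)$ and $z^{2}-(vw+(vw)^{-1})z+1=(z-vw)(z-(vw)^{-1})$, a short computation gives
\[
\sin^{2}\gamma=\frac{-4\,(z-v/w)(z-w/v)(z-vw)\bigl(z-(vw)^{-1}\bigr)}{z^{2}(v-v^{-1})^{2}(w-w^{-1})^{2}},
\]
so $\gamma$ is a rational angle precisely when there is $Y\in\Q$ with $Y^{2}=-(z-v/w)(z-w/v)(z-vw)\bigl(z-(vw)^{-1}\bigr)$, namely $Y=\tfrac12 z\,(v-v^{-1})(w-w^{-1})\sin\gamma$. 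Combined with the first step, this identifies hyperbolic Heron triangles with two sides $\log v,\log w$ with the rational points of this quartic curve lying in the region cut out by $z>1$, $\sin\gamma>0$, $\alpha>0$, $\beta>0$ and $\alpha+\beta+\gamma<\pi$ (each an explicit inequality in $z$, via the formulas of the first step), after deleting the two branch points $z\in\{v/w,\,vw\}$ --- the degenerate limits $\gamma\in\{0,\pi\}$ --- together with a couple of further exceptional points; this region is the open condition \eqref{eq:cond-B}.

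Finally I would put the quartic into Weierstrass form. All four of its roots $r_{1}=v/w$, $r_{2}=w/v$, $r_{3}=vw$, $r_{4}=(vw)^{-1}$ are rational, so the fractional-linear substitution $z=(vw)^{-1}+t^{-1}$, after clearing the factor $t^{-4}$ and a rational rescaling, turns it into a cubic $y^{2}=(x-X_{1})(x-X_{2})(x-X_{3})$ with $X_{i}=(r_{4}-r_{1})(r_{4}-r_{2})(r_{4}-r_{3})/(r_{4}-r_{i})$ for $i=1,2,3$; translating $x\mapsto x+X_{3}$ one computes $X_{1}-X_{3}=(w-w^{-1})^{2}$ and $X_{2}-X_{3}=(v-v^{-1})^{2}$, which yields exactly $y^{2}=x\bigl(x-(v-v^{-1})^{2}\bigr)\bigl(x-(w-w^{-1})^{2}\bigr)$. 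Every change of variable used has rational coefficients and is invertible, so this is a $\Q$-isomorphism of curves, hence a bijection on rational points away from the finitely many excluded points; transporting the region of the previous paragraph along it gives \eqref{eq:cond-B} in the coordinates of the theorem and completes the correspondence. I expect the main difficulty to lie not in the algebra of this last step but in the bookkeeping of the second one: verifying that no part of the Heron condition is lost in the reduction, and pinning down \eqref{eq:cond-B} precisely enough that the correspondence is genuinely one-to-one --- in particular, handling the $z\leftrightarrow z^{-1}$ ambiguity (resolved by $z>1$) and identifying the extra points that must be excluded.
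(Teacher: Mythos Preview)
Your approach is correct and essentially matches the paper's: both reduce via the hyperbolic laws of sines and cosines to the same quartic in $z=e^{c}$ (the paper writes it as $4u^{2}v^{2}w^{2}\Delta_{2}^{2}=(uv-w)(uw-v)(vw-u)(uvw-1)$, which factors into your form), and then pass to the Weierstrass model---the paper by an explicit rational substitution, you by the standard device of sending the rational root $(vw)^{-1}$ to infinity. The one place where the paper is sharper is the open condition: \eqref{eq:cond-B} turns out to be nothing more than the triangle inequality $\max(v/w,\,w/v)<z<vw$ (together with $v,w>1$), since for a hyperbolic triangle with prescribed side lengths the positivity of the angles and $\alpha+\beta+\gamma<\pi$ are automatic, the two branch points you single out already lie on the boundary of this interval, and the change of variables is everywhere defined there---so no ``further exceptional points'' need to be excluded.
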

Here the condition \eqref{eq:cond-B} describes the fact that the lengths are positive and satisfy the triangle inequality.
It turns out that (perhaps surprisingly) this curve has generically rank 0 over $\Q$, suggesting that it is 
harder to complete two sides to a rational triangle, than it is to find a triangle with a fixed angle and area.

In the Euclidean world, a further question one can ask is that of the rationality of the medians.
This was first asked (and solved) by Euler in the case of one rational median \cite{Euler}. 
The problem for Heron triangles having 3 rational medians is problem D21 in Guy's book \cite{Guy}; it is still open as of today.
The two-median problem was solved by Buchholz and Rathbun \cite{BR97, BR98}.

In the hyperbolic setting we have again two choices in our translation of median: the (hyperbolic) line 
from one vertex meeting the opposite edge in its midpoint, or the line from one vertex separating the 
triangle into two triangles of equal area.
We will call the first one the \emph{median} and the second one the \emph{area bisector}.
These two lines are not the same in general (one can easily be convinced by considering ideal 
triangles), but they coincide in an isosceles triangle (for the lines passing through the apex).

The first (negative) result  along those lines concerns the simple case of equilateral triangles.
\begin{propositionAbs} \label{prop:equilateral}
  There are no equilateral hyperbolic Heron triangles. Moreover, equilateral triangles with rational 
  sides or rational angles have no rational medians/area bisectors.
\end{propositionAbs}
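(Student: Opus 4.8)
\emph{Proof strategy.} The plan is first to reduce all three assertions to a single arithmetic statement, and then to settle that statement by a descent. An equilateral hyperbolic triangle is determined by its common angle $\alpha\in(0,\pi/3)$, equivalently by $\gamma:=\cos\alpha\in(1/2,1)$: the hyperbolic law of cosines gives the equilateral relation $\cosh a=\gamma/(1-\gamma)$, whence $\sinh^2 a=(2\gamma-1)/(1-\gamma)^2$. The reflection of the triangle exchanging two vertices fixes the geodesic from the third vertex to the midpoint of the opposite side, so this geodesic is the perpendicular bisector of that side; hence in an equilateral triangle the median and the area bisector from a vertex coincide (and the three of them are congruent), and cutting the triangle along this segment and applying the hyperbolic Pythagorean theorem yields, for its common length $m$,
\[
\cosh^2 m=\frac{2\gamma^2}{1-\gamma},\qquad \sinh^2 m=\frac{(2\gamma-1)(\gamma+1)}{1-\gamma}.
\]
Translating the rationality conditions: ``$e^a\in\Q$'' $\iff$ $\gamma\in\Q$ and $2\gamma-1$ is a nonzero rational square; ``$\alpha$ rational'' $\iff$ $\gamma\in\Q$ and $1-\gamma^2$ is a nonzero rational square; ``$e^m\in\Q$'' $\iff$ $2(1-\gamma)$ and $(2\gamma-1)(1-\gamma^2)$ are nonzero rational squares. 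In each of the three statements the hypotheses then force $\gamma\in\Q$ with \emph{both} $1-\gamma^2$ and $2\gamma-1$ nonzero rational squares --- directly for a Heron triangle, and for the two median/area-bisector statements by multiplying $(2\gamma-1)(1-\gamma^2)$ by $2\gamma-1$, respectively by $1-\gamma^2$. So the whole Proposition follows from the claim that \emph{there is no $\gamma\in\Q$ for which $1-\gamma^2$ and $2\gamma-1$ are both nonzero squares in $\Q$.}

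Setting $u=\tan(\alpha/2)$, so that $\gamma=(1-u^2)/(1+u^2)$ and $2\gamma-1=(1-3u^2)/(1+u^2)$, this claim becomes: the curve $C\colon z^2=(1-3u^2)(1+u^2)$ has no rational point with $u\neq0$. Since a rational point already forces $u^2\le 1/3$ over $\R$, and $u^2=1/3$ is irrational, it is equivalent to show $C(\Q)=\{(0,\pm1)\}$ (there are no rational points at infinity, as $-3$ is not a square). Taking $(0,1)$ as origin makes $C$ an elliptic curve, and I would finish in two steps. First, a short computation --- for instance reducing modulo a few primes of good reduction and checking $2$-divisibility of the unique rational $2$-torsion point --- shows that $C(\Q)_{\mathrm{tors}}$ has order $2$, so $C(\Q)_{\mathrm{tors}}=\{(0,\pm1)\}$. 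Second, $C$ has rank $0$: it lies in the isogeny class of $E\colon Y^2=X(X-12)(X+4)$, which carries the full rational $2$-torsion, and a $2$-descent on $E$ shows that every square-class pair $(d_1,d_2)\in(\Q^{\times}/\Q^{\times2})^2$ that could lie in the image of the descent map without coming from $E[2]$ is inadmissible: a first pass over $\R$ forces $d_1$ and $d_2$ to have the same sign (so that $X$, $X-12$, $X+4$ can have compatible signs), cutting the candidates down to sixteen, four of which come from the $2$-torsion; each of the remaining twelve then fails to be locally solvable at $2$ or at $3$ by a short congruence obstruction. Hence the descent image equals the image of $E[2]$, so $E(\Q)$ and $C(\Q)$ have rank $0$, and therefore $C(\Q)=C(\Q)_{\mathrm{tors}}=\{(0,\pm1)\}$, all with $u=0$ (that is, $\alpha=0$), a degenerate configuration. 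This proves the claim and the Proposition.

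It is worth noting that an elementary descent reaches the same rank statement by hand. Clearing denominators, the claim says that $(q^2-3p^2)(q^2+p^2)$ is never a nonzero perfect square for coprime integers $p\neq0$, $q$; since the greatest common divisor of the two factors is a power of $2$ dividing $4$, one argues by the parities of $p$ and $q$. If $p$ and $q$ are both odd, each factor is twice an odd square and the resulting relations $2q^2=3s^2+r^2$ and $2p^2=s^2-r^2$ are incompatible modulo $3$. If $q$ is even, then $q^2+p^2=s^2$ is a primitive Pythagorean square with even leg $q=2ab$, the quantity $q^2-3p^2=-(3a^2-b^2)(a^2-3b^2)$ must be a square with coprime factors, and hence $3a^2$ and $3b^2$ are both sums of two squares while one of $a,b$ is odd --- impossible, since a positive integer $\equiv3\pmod4$ is not a sum of two squares. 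The only remaining case, $p$ even and $q$ odd, gives via the primitive triple $q=a^2-b^2$, $p=2ab$ a rational point on $y^2=k^4-14k^2+1$ with $k=a/b$, a curve which the substitution $u=2k/(k^2-1)$ identifies with a model $2$-isogenous to $C$; so this case genuinely reduces to the rank-$0$ fact.

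The main obstacle is precisely that rank-$0$ assertion: the congruence descent does not terminate on its own (the last parity case reproduces a curve in the isogeny class of $C$), so one must carry the $2$-descent through the primes $2$ and $3$ --- the archimedean place alone does not suffice --- or else invoke a rank computation; one must also be careful to enumerate all admissible square classes and to verify that the order-$2$ torsion of $C$ contributes only the degenerate point $u=0$.
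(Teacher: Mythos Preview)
Your reduction is correct and cleaner than the paper's. You observe that all three assertions force the single condition ``$\gamma\in\Q$ with $2\gamma-1$ and $1-\gamma^2$ both nonzero squares'' (for the median cases you only use part of the rationality of $m$, namely that $\sinh^2 m$ is a square, but that already suffices to derive a contradiction). The paper instead proves the two parts separately: for the Heron statement it parametrizes $2\gamma-1=u^2$ first and lands on $v^2=-u^4-2u^2+3$, i.e.\ $y^2=x(x^2+x+1)$; for the median statement it works with $p=\cosh(a/2)$ (resp.\ $p=\sin(\alpha/2)$) and obtains $s^2=4p^4-5p^2+1$, i.e.\ $y^2=x(x+1)(x+9)$. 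Your curve $C:z^2=(1-3u^2)(1+u^2)$, after the substitution $u^2=-2(t+1)/(t^2+3)$ coming from the conic, has Weierstrass model $Y^2=X(X^2+4X+16)$, which is the rescaling $x\mapsto x/4$ of the paper's first curve; and your $E:Y^2=X(X-12)(X+4)$ is the rescaled $2$-isogenous partner $y^2=x(x-3)(x+1)$. So all the curves in play lie in a single isogeny class, and your unified treatment replaces the paper's two separate rank-$0$ verifications by one.

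Two small points. First, your real-place count in the descent is slightly off: on $E$ the product $X(X-12)(X+4)\ge 0$ holds on $(-4,0)\cup(12,\infty)$, and on each piece the pair $(X,X-12)$ has a fixed sign pattern, giving $32$ admissible classes rather than $16$ before the local conditions at $2$ and $3$; the remaining elimination still goes through. Second, your elementary-descent paragraph is a nice illustration but, as you already note, the $p$ even case only returns you to a $2$-isogenous model, so it is not an independent proof. Neither issue affects the correctness of your main argument.
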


With similar methods as those used for Theorem~\ref{th:heron-angles} and Theorem~\ref{th:heron-sides}, we 
can parametrize hyperbolic triangles with one rational median and Heron triangles with one rational 
area bisector using elliptic curves. 
However, these curves are quite complicated, so we will present them in Section~\ref{sec:medians} and 
\ref{sec:area-bisectors} respectively.
We will only give one corollary of this parametrization, in the case of medians:
\begin{theorem}
    For almost all values $u,w \in \Q$ there are infinitely many hyperbolic triangles having rational side 
    lengths, two of which given by $a = 2 \log(u)$ and $b = \log(w)$, and one rational median (intersecting 
    side $a$).
\end{theorem}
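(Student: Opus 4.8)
The plan is to reduce the statement to a rank computation, exactly as in the proof of Corollary~\ref{cor:heron-angles-rank1}. By the parametrization of hyperbolic triangles with one rational median to be established in Section~\ref{sec:medians}, for fixed $u,w\in\Q$ the triangles in question — rational side lengths, two of them equal to $2\log u$ and $\log w$, and a rational median meeting the side of length $2\log u$ — correspond bijectively to the rational points of an explicit elliptic curve $E_{u,w}/\Q$ lying in a region $U_{u,w}\subseteq E_{u,w}(\R)$ carved out by a real-open condition (the third side and the median have positive real length, and the triangle inequality holds). It therefore suffices to produce, for all $(u,w)$ with $U_{u,w}\neq\emptyset$ outside a finite union of plane curves, infinitely many rational points of $E_{u,w}$ lying in $U_{u,w}$.

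First I would show that $E_{u,w}$ has positive rank over the function field $\Q(u,w)$, by exhibiting a non-torsion section. The natural source is a degenerate configuration in which the formulas collapse: the midpoint of the side of length $2\log u$ splits it into two segments of the \emph{rational} length $\log u$, so the hyperbolic analogue of Stewart's theorem relating the median to the sides $b=\log w$, $c$ and the half-side $\log u$ becomes tractable and singles out an explicit point $P(u,w)\in E_{u,w}(\Q(u,w))$ — for instance the one coming from the isosceles case, in which the median coincides with the area bisector. That $P(u,w)$ is non-torsion then follows from a single convenient specialization: choosing a rational pair $(u_0,w_0)$ and checking, by a $2$-descent or a short point search on $E_{u_0,w_0}$ together with a Nagell--Lutz test, that the image of $P$ has infinite order forces $P(u,w)$ itself to be non-torsion, since torsion sections reduce to torsion points. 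Producing this section and carrying out the verification is the main obstacle: as the authors warn, the median curve is substantially more complicated than the curves of Theorems~\ref{th:heron-angles} and~\ref{th:heron-sides}, so writing down the distinguished point and computing the relevant discriminants and canonical heights is where the real work lies.

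Granting that the family has generic rank $\geq 1$, Silverman's specialization theorem shows the specialization map $E_{u,w}(\Q(u,w))\to E_{u_0,w_0}(\Q)$ is injective for all $(u_0,w_0)$ outside a thin set; the quantitative version used for Corollary~\ref{cor:heron-angles-rank1} — height bounds confining the exceptional locus to the zero sets of finitely many polynomials $f_i(u,w)$ — shows this set is contained in finitely many curves. Hence for almost all $(u,w)$ the curve $E_{u,w}$ has a rational point of infinite order, so $E_{u,w}(\Q)$ is infinite. To pass from infinitely many rational points to infinitely many triangles I would use the standard fact that, once $E_{u,w}$ has positive rank, $E_{u,w}(\Q)$ is dense in the identity component $E_{u,w}(\R)^0$ (via the identification $E_{u,w}(\R)^0\cong\R/\Z$, under which a non-torsion point generates a dense subgroup); since $U_{u,w}$ is open and — after at most one further harmless genericity restriction on $(u,w)$, ensuring that the distinguished triangle of the previous step is non-degenerate and lands in the identity component — meets $E_{u,w}(\R)^0$ in a nonempty open set, $E_{u,w}(\Q)\cap U_{u,w}$ is infinite. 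Translating back through the parametrization of Section~\ref{sec:medians} then yields infinitely many hyperbolic triangles with rational sides, two of them equal to $2\log u$ and $\log w$ and a rational median meeting the side of length $2\log u$, as desired.
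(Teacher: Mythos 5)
Your overall strategy is the same as the paper's (parametrize via the elliptic curve of Section~\ref{sec:medians}, exhibit a section of infinite order over $\Q(u,w)$, specialize outside a curve, then get infinitely many admissible points by real density), but the crux of the argument is exactly the step you defer, and your plan for it would not work as stated. The entire content of the paper's proof beyond the parametrization is Lemma~\ref{lem:rank-medians}: the explicit point $P(u,w)=\bigl(0,\,4u(u^2+1)^2w^2(w^2+1)\bigr)$ on the curve \eqref{eq:rat-median}, defined over $\Q(u,w)$ and verified to be of infinite order. You propose to find such a section from the isosceles configuration, ``in which the median coincides with the area bisector,'' but an isosceles triangle with $e^{a/2},e^b\in\Q$ does not automatically have a rational median (one still needs $\sinh(m)\in\Q$, i.e.\ $\bigl(\cosh(b)+\cosh(c)\bigr)^2-4\cosh^2(a/2)$ to be a square), so this configuration does not furnish a $\Q(u,w)$-point of \eqref{eq:rat-median}. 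In fact the paper's section sits at $x=0$, where the inverse change of variables has $x$ in the denominator of $v$: it is a ``degenerate'' point with $v=\infty$, not the image of any triangle, isosceles or otherwise. So the non-torsion section has to be found and checked by direct computation on the curve itself; without it your argument has no starting point, and acknowledging that this is ``where the real work lies'' does not close the gap.

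A secondary weak spot is the density step. Knowing only that $E_{u,w}(\Q)$ is infinite gives density in the real component containing the known rational points; to conclude that infinitely many rational points satisfy the triangle conditions you must show that the open region of genuine triangles meets a component containing rational points (the paper's Corollary~\ref{cor:heron-angles-rank1} does this by exhibiting all three rational $2$-torsion points, so that both components carry rational points). Your ``one further harmless genericity restriction ensuring the distinguished triangle lands in the identity component'' does not address this: the distinguished point of the paper does not correspond to a triangle at all, and whether the triangle region meets the right component is not a genericity statement about $(u,w)$ that you can simply assume; it needs an argument, e.g.\ rational $2$-torsion of \eqref{eq:rat-median} or an explicit rational point in each component.
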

Here almost all means all but those cut out by a curve in $u,w$.

The paper is organized as follows.
Section~\ref{sec:heron-angles} and Section~\ref{sec:heron-sides} are dual to each other, and cover the 
parametrization of Heron triangles in terms of angles (Theorem~\ref{th:heron-angles}) and sides 
(Theorem~\ref{th:heron-sides}) respectively.
Section~\ref{sec:equilateral} is focused on medians and area bisectors in the simple case of equilateral 
triangles.
Finally, in Section~\ref{sec:medians} we give the parametrization of hyperbolic triangles with rational side 
lengths and one rational median, while Section~\ref{sec:area-bisectors} is devoted to the dual computation of 
the parametrization of Heron triangles with one rational area bisector.

\section{Hyperbolic heron triangles -- Angle parametrization} \label{sec:heron-angles}
\subsection{Finding the angle parametrization}
In this section we give the parametrization of hyperbolic Heron triangles in terms of angles and area, and 
prove Theorem~\ref{th:heron-angles} and Corollary~\ref{cor:heron-angles-rank1}.
Note that throughout this paper we only consider the case of non-degenerate bounded triangles 
(i.e., with no vertex at infinity), so that its angles and area are always positive and its side lengths finite.

Let $\alpha, \beta, \gamma >0$ denote the angles of a hyperbolic triangle.
Assume they are rational (as defined in the introduction) i.e., 
that $e^{i\alpha}, e^{i\beta}, e^{i\gamma} \in \Q[i]$.
Since the area $A = \pi - \alpha - \beta - \gamma$ we get that $A$ is also rational: $e^{iA} \in \Q[i]$.

If we denote by $a$ (resp.\ $b,c$) the side length opposite $\alpha$ (resp.\ $\beta, \gamma$),
the hyperbolic law of cosines (for the angles) reads:
\begin{equation}
    \label{eq:law-cos-1}
 \sin(\beta) \sin(\gamma) \cosh(a) = \cos(\alpha) + \cos(\beta) \cos(\gamma)
\end{equation}
and similar formulas involving $\cosh(b), \cosh(c)$.
As the angles are rational, it follows that $\cosh(a), \cosh(b), \cosh(c) \in \Q$. 
To get a hyperbolic Heron triangle, it thus only remains to find the condition that $\sinh$ of $a,b,c$ are also 
rational.

The (inverse of) the hyperbolic law of sines gives
\[
  \frac{\sinh(a)}{\sin(\alpha)} = 
  \frac{\sinh(b)}{\sin(\beta)} = 
  \frac{\sinh(c)}{\sin(\gamma)}.
\]
Multiplying by $\sin(\alpha)\sin(\beta)\sin(\gamma)$ we get:
\[
  \sinh(a)\sin(\beta)\sin(\gamma) = 
  \sinh(b)\sin(\alpha)\sin(\gamma) = 
  \sinh(c)\sin(\alpha)\sin(\beta).
\]
Call this quantity $\Delta_1$.
Squaring \eqref{eq:law-cos-1} we get 
\[
    \sin(\beta)^2 \sin(\gamma)^2 (\sinh(a)^2 + 1) = (\cos(\alpha) + \cos(\beta) \cos(\gamma))^2
\]
and thus:
\begin{equation}\label{eq:Heroncondition}
  \Delta_1^2 = (\cos(\alpha) + \cos(\beta) \cos(\gamma))^2 - \sin(\beta)^2 \sin(\gamma)^2 \in \Q.
\end{equation}
Using trigonometric identities, we can rewrite this as a symmetric expression in 
$\alpha, \beta, \gamma$: 
\[
    \begin{array}{rcl}
        2\Delta_1^2 &=&  \cos(-\alpha + \beta + \gamma) + \cos(\alpha - \beta + \gamma) 
            + \cos(\alpha + \beta - \gamma) \\
                    && + \cos(\alpha + \beta + \gamma) + 
                \cos(2 \alpha) + \cos(2 \beta) + \cos(2 \gamma) + 1,
    \end{array}
\]
and we have that $\Delta_1 \in \Q$ if and only if all three of $\sinh(a)$, $\sinh(b)$, and $\sinh(c)$ are 
rational.
Substituting $\gamma = \pi - A - \alpha - \beta$ and expanding the cosines, we obtain
(writing $c_A = \cos(A), s_A = \sin(A)$, etc...):
%, we get 
%\[
    %\begin{array}{rcl}
        %2\Delta_1^2 &=& -\cos(A + 2\alpha) - \cos(A + 2\beta) - \cos(A+ 2\alpha + 2 \beta) 
                %\\ && - \cos(A) + \cos(2 \alpha) + \cos(2 \beta) + \cos(2 (A + \alpha + \beta)) + 1.
    %\end{array}
%\]
%Expanding all the cosines, we obtain (writing $c_A = \cos(A)$ etc...)
%expression correcte: (vérifié sur sage)
%val2 = (cos(A)^2 - sin(A)^2)*((cos(a)*cos(b) - sin(a)*sin(b))^2 - (cos(a)*sin(b) + cos(b)*sin(a))^2) \
%- 4*sin(A)*cos(A)*(cos(a)*sin(a)*(cos(b)^2 - sin(b)^2) + cos(b)*sin(b)*(cos(a)^2 - sin(a)^2)) \
%- cos(A)*( (cos(a)*cos(b) - sin(a)*sin(b))^2 - (cos(a)*sin(b) + cos(b)*sin(a))^2 + cos(a)^2 - sin(a)^2 + cos(b)^2 - sin(b)^2 + 1) \
%+ 4*sin(A)*(cos(a)*sin(a)*cos(b)^2 + cos(b)*sin(b)*cos(a)^2) \
%+ cos(a)^2 - sin(a)^2 + cos(b)^2 - sin(b)^2 + 1
\begin{equation}\label{eq:c_Aandfriends}
  \begin{array}{rcl}
      2 \Delta_1^2 &=& 
  (c_A^2 - s_A^2)\big[(c_\alpha c_\beta - s_\alpha s_\beta)^2 -  (c_\alpha s_\beta + c_\beta s_\alpha)^2\big]
                \\ && 
  - 4 c_A s_A\big[c_\alpha s_\alpha (c_\beta^2 - s_\beta^2) + c_\beta s_\beta(c_\alpha^2 - s_\alpha^2)\big]
                \\ && 
  - c_A\big[(c_\alpha c_\beta - s_\alpha s_\beta)^2 -  (c_\alpha s_\beta + c_\beta s_\alpha)^2
      + 2c_\alpha^2 + 2c_\beta^2 - 1\big]
                 \\&& +
  4 s_A(c_\alpha s_\alpha c_\beta^2 + c_\beta s_\beta c_\alpha^2) + 2 c_\alpha^2 + 2 c_\beta^2 - 1.
    \end{array}
\end{equation}

We wish to express this in terms of \emph{rational} angles.
Setting
\[  \renewcommand{\arraystretch}{1.5}
    \arraycolsep=1.5em
    \begin{array}{lll}
        \cos(A) = \frac{1 - m^2}{1+m^2}, & 
        \cos(\beta) = \frac{1-u^2}{1+u^2}, &
        \cos(\alpha) = \frac{1-t^2}{1+t^2},        \\
        \sin(A) = \frac{2m}{1+m^2}, &  
        \sin(\beta) = \frac{2u}{1+u^2}, &
        \sin(\alpha) = \frac{2t}{1+t^2}, 
  \end{array}
\]
and $w = (m^2+1)(u^2 + 1)(t^2 + 1)\Delta_1$,  equation \eqref{eq:c_Aandfriends} rewrites as:
\begin{align} \label{eq:w-t-angles}
    w^2  = & 4 m  (mu^2 - m - 2u)  (mt^2  -2t - m) \big[(mu^2 - m - 2u)t^2 
                 \\& + (-4mu - 2u^2 + 2)t - mu^2 + m + 2u\big]. \nonumber
\end{align}        
Finally, setting $n = m(m^2 + 1) (2u - m(u^2-1))$ and applying the change of variables 
{\small
\begin{align*}
    y =& -\frac{(2u - m (u^{2} -1) )}{4  t^{3}}
\big[2(2u - m (u^{2} -1) ) (m u - 1) (m + u) m^{2} 2(t^{2} - 3) 
        \\& +2  (m^{2} u^{2} - m^{2} - 6  m u - 2  u^{2} + 2) (2u - m (u^{2} -1)) m^{2} t^{2} + 2  (2u - m (u^{2} -1))^{2} m^{3} 
        \\&-  (m u - 1) (m + u) m t w + (2u - m (u^{2} -1)) m^{2} w\big]
\\
        x  = &\frac{(2u - m (u^{2} -1))}{4t^2} \big[-4   (m u - 1) (m + u) m t +2  (m^{2} u^{2} + m^{2} - 2  m u + 2) 
        m t^{2}  
        \\&+ 2  (2u - m (u^{2} -1)) m^{2} + m w\big],
    \end{align*}}
with inverse
{\small
\begin{align*}
    t  =& - \frac{m(2u - m(u^2-1))(x - (m+u)^2(m^2+1))}%
    {y + (m+u)(1 - mu)(x-n)} \\
   % w =& 2 m (2u - m(u^2-1))(y + (m+u)(1-mu)(x-n))^{-2}\\
      %& \times \big[2x^3 - y^2 + x^2 (m^2+1)(m^2u^4 + m^2u^2 - 2mu^3 - 5m^2 - 10mu - 3u^2) \\
      %& + 2 y (m+u)u^2(mu-1)(m^2+1)^2 -2 x m(m+u)^2(u^2+2)(m^2+1)^2(mu^2-m-2u) \\
     % & - m^2 (m+u)^2(u^2+1)(m^2+1)^3(mu^2 - m - 2u)^2\big]\\
     w=&  2 m (2u - m(u^2-1))(y + (m+u)(1-mu)(x-n))^{-2}\\
      & \times \big[x^3-3(m^2+1)(u+m)^2x^2 \\&+m(m^2+1)^2(2u - m(u^2-1))(m^2u^4+2u^4+2mu^3+2m^2u^2+4u^2+6mu+3m^2)x \\&-m^2(m^2+1)^3(u+m)^2(u^2+1)(2u - m(u^2-1))^2+
      2(m^2+1)^2u^2(u+m)(mu-1)y\big]\\
\end{align*}
}
%\mcom{I've just rechecked the above change and I agree with it.}
we get the equation: 
\begin{equation}\label{eq:angles}
    y^2 = x  \big(x - n\big)  \big(x - n (u^2 + 1)\big).
\end{equation}
Its discriminant is $2^4 u^4 (u^2 + 1)^2  n^6 = 2^4 u^4 m^6 (u^2 + 1)^2 (m^2 + 1)^6 (2u - m(u^2-1))^6$.

We are ready to complete the proof of Theorem~\ref{th:heron-angles}.
\begin{proof}[Proof of Theorem~\ref{th:heron-angles}]
    It just remains to exhibit the open condition (A), which encodes the fact that the parameters give rise to 
    an actual hyperbolic triangle.
    Assume first that the inverse of the change of variables is defined; this happens everywhere but on the 
    line $y + (m+u)(1 - mu)(x-n) =0$. 
    We claim that the conditions on $m,u,t$, along with their meaning are given by the following table:
\begin{table}[ht]
  \begin{tabular}{c|c}
    Condition & Translation\\\hline
  $m > 0$ & area $ A >0$ \\
  $t > 0$ & $\alpha > 0$ \\
  $u > 0$ & $\beta> 0$ \\
  $(tm + tu + mu -1)(tmu - t - m - u) > 0$ & $\gamma > 0$ \\
  $mu < 1$ & $\alpha + \beta + \gamma < \pi$.
    \end{tabular}
    \caption{Conditions for Theorem \ref{th:heron-angles} \label{table:conditionsheronangles}}
\end{table}
%\mcom{Here again, I wonder about the extreme cases, such as $m=0$. Do we need to say anything about them?}
The first three conditions follow trivially from the circle parametrization, and so does the fourth one if one writes 
the formula for the rational angle corresponding to $\gamma$.
For the fifth one, assuming the four others, we know that $0 < A, \alpha, \beta, \gamma < \pi$, and we must 
find a condition encoding $A + \alpha + \beta + \gamma = \pi$.
By construction, any solution to \eqref{eq:angles} gives rise to a set of parameters satisfying this last 
equation \emph{modulo} $2\pi$; thus we have $A + \alpha + \beta + \gamma \in \{\pi, 3\pi\}$ and we wish to
eliminate the $3\pi$ case.
For such a set of parameters, the $3\pi$ case happens exactly when any two elements in 
$A, \alpha, \beta, \gamma$ sum up to $>\pi$. 
Hence we can just pick the condition $A + \beta < \pi$, which translates as $mu < 1$ if one writes the 
formula for the rational angles.

Now if $m,u$ are given parameters with $m,u>0$ and $mu < 1$, the conditions from Table \ref{table:conditionsheronangles} can be simplified to the 
following open condition on the variable $t$:
\begin{equation}\label{eq:cond-A}
  \tag{A}  0 < t < \frac{1 - mu}{m+u}.
\end{equation}
Using the above change of variables, we get an open condition in terms of $x,y$ encoding the desired properties.

It remains to treat the case of points where the expression for $t$ is not defined, i.e., points of the curve 
\eqref{eq:angles} lying on the line $y + (m + u)(1 - mu)(x - n)=0$.
There are 3 such points: the torsion point $T = (n,0)$, the point 
\[
P =\big((m^2+1)(m+u)^2,u^2(m^2+1)^2(m+u)(mu-1)\big)
\]
and the third point $-(P+T)$.
Looking back at points on \eqref{eq:w-t-angles} mapping to the line, we find only one: the point $(w,t)$ with 
\[
    t = \frac{2m(m+u)(1-mu)(2u - m(u^2-1))}{m^4u^4 - m^4u^2 -4m^3u^3 +m^4 +4m^3u +6m^2u^2 + u^2}.
\]
Its image is the point $P$.
So whenever this point exists and fulfills the condition \eqref{eq:cond-A} we get a pre-image for $P$, and 
otherwise we do not. 
The condition \eqref{eq:cond-A} thus needs to be strengthened to exclude those points.
\end{proof}

\subsection{Rank computations} \label{sub:angles-rank}
In this section we compute the rank of the above curve (fixing one parameter $u$ or $m$) and prove 
Corollary~\ref{cor:heron-angles-rank1}.
We will write $E_{m,u}$ for the curve given by \eqref{eq:angles}, and $E_m$ (resp.\ $E_u$) for the same curve 
seen over $\C(u)$ (resp.\ $\C(m)$).

\begin{lemma}\label{lem:rank-computations}
    The ranks of the $K3$ surfaces $E_m(\C(u))$ and $E_u(\C(m))$ fulfill the inequalities:
    \[
     1 \leq \quad \rk(E_m(\C(u))),\rk(E_u(\C(m))) \quad \leq 2.
    \]
    Moreover, the point 
    \[
    P(m,u) =\big((m^2+1)(m+u)^2,u^2(m^2+1)^2(m+u)(mu-1)\big)
    \]  
    is a point of infinite order on $E_{m,u}$, and the torsion group is isomorphic to 
$\Z/2\Z\times\Z/2\Z$, with the points of order two given by 
$(0,0), (n,0),$ and $(n(u^2+1),0)$.
    \end{lemma}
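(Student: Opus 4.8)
The plan is to prove the three assertions of the lemma in the order stated. For the torsion group, note that the curve \eqref{eq:angles} is in the form $y^2 = x(x-n)(x-n(u^2+1))$, so the three obvious $2$-torsion points $(0,0)$, $(n,0)$, $(n(u^2+1),0)$ are present, giving $\Z/2\Z\times\Z/2\Z \hookrightarrow E_{m,u}(\Q(m,u))_{\mathrm{tors}}$. To get the reverse inclusion, I would specialize $m,u$ to convenient rational values (e.g. a small pair with $mu<1$, $m,u>0$) and compute the torsion of the resulting elliptic curve over $\Q$ directly (Nagell–Lutz or a CAS), obtaining exactly $\Z/2\Z\times\Z/2\Z$; since specialization of the torsion subgroup is injective for all but finitely many values, this pins down the generic torsion. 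Alternatively, one can check that the curve has no rational $4$-torsion (none of $n$, $n(u^2+1)$, $-nu^2$ is a square in $\Q(m,u)$ for generic $m,u$) and no rational $3$-torsion, which is a short division-polynomial computation.

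For the claim that $P(m,u)$ has infinite order, the cleanest route is again specialization: evaluate $P$ at a rational pair $(m_0,u_0)$ where the curve $E_{m_0,u_0}$ is an explicit elliptic curve over $\Q$, and verify that the specialized point is not torsion — e.g. by checking that none of $2P$, $3P$, $4P$, $6P$ equals $O$, or by computing a nonzero canonical height, or simply by observing that the $x$-coordinate is not a half-integer/does not satisfy Nagell–Lutz. Since specialization $E_{m,u}(\overline{\Q(m,u)}) \to E_{m_0,u_0}(\overline{\Q})$ is injective for all but finitely many $(m_0,u_0)$ (Silverman's specialization theorem), a single good specialization shows $P(m,u)$ is non-torsion, hence of infinite order. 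One should double-check that $P$ actually lies on the curve — this is the identity $y^2 = x(x-n)(x-n(u^2+1))$ with $x=(m^2+1)(m+u)^2$, which factors nicely: $x-n = (m^2+1)\bigl((m+u)^2 - m(2u-m(u^2-1))\bigr) = (m^2+1)(1+m^2)u^2\cdot(\text{something})$, and one verifies the product is a square; this is routine algebra.

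For the rank bounds on the K3 surfaces $E_m(\C(u))$ and $E_u(\C(m))$, the upper bound $\rk \le 2$ and the lower bound $\rk \ge 1$ should both come from the Shioda–Tate formula together with an analysis of the singular fibers. Concretely, write the curve as an elliptic surface over $\mathbb{P}^1$ in the parameter $u$ (resp.\ $m$); from the discriminant $2^4 u^4(u^2+1)^2 n^6$, where $n = m(m^2+1)(2u-m(u^2-1))$, read off the singular fibers and their Kodaira types (the factors $u$, $u^2+1$, $2u-m(u^2-1)$, and the fiber at infinity), compute the Euler characteristic and the contributions $\sum_v (m_v-1)$ of reducible fibers, and use $\rho \le h^{1,1} = 20$ for a K3 surface in the Shioda–Tate identity $\rho = 2 + \rk + \sum_v(m_v - 1)$. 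This yields $\rk \le 2$. The lower bound $\rk \ge 1$ is then immediate from the point $P(m,u)$ constructed above (its non-triviality over $\C(u)$, resp.\ $\C(m)$, follows from the infinite-order statement, since a non-torsion point over $\Q(m,u)$ remains non-torsion over the larger field $\C(u)$). The main obstacle is the fiber-type bookkeeping for the upper bound: one must correctly identify the Kodaira type at each bad place (in particular whether the fibers are multiplicative $I_k$ or additive, and the precise behavior at $u=\infty$), since an error there changes $\sum_v(m_v-1)$ and hence the bound; I would verify this either by hand using Tate's algorithm on the minimal model or by a direct computation in a CAS for a generic value of the frozen parameter.
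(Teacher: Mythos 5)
Your treatment of the rank bounds follows the same route as the paper: the authors also run Tate's algorithm and feed the fiber data into the Shioda--Tate formula together with $\rho\leq 20$, finding fibers of type $I_0^*$ at $m=0,\pm i,\tfrac{2u}{u^2-1}$ for $E_u$ (contribution $4\cdot(5-1)=16$), and $I_4$ at $u=0,\infty$, $I_2$ at $u=\pm i$, $I_0^*$ at the roots of $mu^2-2u-m$ for $E_m$ (contribution $2\cdot 3+2\cdot 1+2\cdot 4=16$), so that in both cases $\rho = \rk + 18 \leq 20$; the lower bound comes from $P$, exactly as you say, so here you only owe the explicit bookkeeping you defer. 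For the infinite order of $P$, your specialization argument is valid, and in fact you need less than Silverman's injectivity theorem: specialization is a group homomorphism, so a torsion point specializes to a torsion point in every smooth fiber, and a single specialization of infinite order already settles the matter (this sidesteps the issue that you cannot certify a given $(m_0,u_0)$ to be one of the ``good'' values, and also the issue that Silverman's theorem is stated for one-parameter bases). The paper instead computes the Shioda height of the section, $h(P)=2$, which is a genuinely different (and field-independent) certificate.

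The one substantive gap concerns the torsion claim. The paper's proof determines the torsion of $E_u(\C(m))$ and $E_m(\C(u))$, i.e.\ of the Mordell--Weil groups over the complex function fields: it combines the rank bound and $\chi=2$ with Table (4.5) of Miranda--Persson to reduce the possibilities to $\Z/4\Z\times\Z/2\Z$ or $\Z/2\Z\times\Z/2\Z$, and then rules out the former by checking that the $2$-torsion points are not twice a point over $\C(m)$ (resp.\ $\C(u)$). Your specialization to a rational pair and a Nagell--Lutz computation over $\Q$, or a division-polynomial check over $\Q(m,u)$, only controls the torsion of $E_{m,u}$ over $\Q(m,u)$: a torsion section of the K3 surface defined over $\C$ but not over $\Q(m,u)$ would specialize to a torsion point of $E_{m_0,u_0}(\overline{\Q})$ that is invisible to a computation over $\Q$. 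If the lemma is read as the paper's proof reads it, namely as a statement about the surfaces $E_u$ and $E_m$, you must either import the Miranda--Persson constraint (or some other geometric bound on torsion sections) or run your checks over $\C(m)(u)$; your square-class criterion for $2$-divisibility does survive this base change (e.g.\ $n$ is not a square in $\C(m)(u)$ because $2u-m(u^2-1)$ is squarefree of degree $2$ in $u$), but excluding odd-order torsion over $\C$ needs an argument you have not supplied. If instead one only needs the torsion over $\Q(m,u)$ --- which is all that the corollary actually uses --- your argument is adequate.
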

\begin{proof}
    The lower bound follows from the fact that $P$ is on the curve $E_{m,u}$ and of infinite order, 
    which is easily verified by inspection. Alternatively one can see that the height pairing of the 
    Mordell--Weil group on $E_u$ (resp.\ on $E_m$) gives $h(P)=2$.

    For the upper bound, we apply Tate's algorithm \cite[IV.9]{Silverman}.
    For $E_u(\C(m))$ we see that the singularities $m = 0, \pm i, \frac{2u}{u^2-1}$ are all of type $I_0^*$
    in the Kodaira classification, and so the number of components of each singular fiber is 5.
    Now by the Shioda--Tate formula (see \cite[Corollary~1.5]{Shioda}, or alternatively 
    \cite[Corollary~6.7)]{SS-book}) we have
    \[
        \rho(E_u)=\rk(E_u(\C(m))) + 2 + 4 \cdot(5-1) = \rk(E_u(\C(m)))+18
    \]
    where $\rho(E_u)$ is the Picard number of $E_u$.
    Since $E_u$ is a $K3$ surface, we have ${\rho(E_u) \leq 20}$, and thus 
    $\rk(E_u(\C(m))) \leq 2$.

    For $E_m(\C(u))$, we use the same formula. 
    The singularities at $u=0$ and $u=\infty$ are of type $I_4$, the ones at $u=\pm i$ of type $I_2$ and the 
    ones at the roots of $m u^2 -2u -m$ of type $I_0^*$.
    Applying the Shioda--Tate formula again, 
    \[
    \rho(E_m)=\rk(E_m(\C(u))+2+2\cdot (4-1)+2\cdot (2-1)+2\cdot(5-1)=\rk(E_m(\C(u))+18.
    \]
    Thus $\rk(E_m(\C(u))) \leq 2$ also in this case.
    
  Finally, it is immediate to see that $(0,0), (n,0),$ and $(n(u^2+1),0)$ are points of order~2. Recall that the Euler characteristic satisfies $\chi=2$ for $K3$ surfaces. We combine this with the bound on the rank and Table (4.5) in \cite{MirandaPersson} to conclude that $E_u$ (resp. $E_m$) has torsion isomorphic to $\Z/4\Z\times \Z/2\Z$ or $\Z/2\Z\times \Z/2\Z$. Then one can check directly that the points of order 2 cannot be written as twice a point in  $E_u(\C(m))$ (resp. in $E_m(\C(u))$).  
\end{proof}

If we set $u=1$ in the case of $E_u$, the singularities are at $m = 0, \pm i, \infty$ and the previous Lemma still applies. In this particular case, we can identify a second point of infinite order given by 
\[
Q(m)=\big(2m(m+1)^2,i4m^2(m^2-1)\big).
\]
One can prove that the height pairing of the Mordell--Weil group on $E_1$ gives $h(Q)=2$, and that $\langle P,Q\rangle=0$, showing that $P(m,1)$ and $Q(m)$ are independent, and that the rank of $E_1(\C(m))$ is exactly 2.

We move on to the proof of Corollary~\ref{cor:heron-angles-rank1}.
\begin{proof}[Proof of Corollary~\ref{cor:heron-angles-rank1}]
    We need to show that for each fixed $m,u >0$ with $mu < 1$, there are infinitely many rational points
    on the curve $E_{m,u}$ satisfying the open condition \eqref{eq:cond-A}.
    By a theorem of Poincaré and Hurwitz (see \cite[Satz~11, p.~78]{Skolem})
    the rational points $E_{m,u}(\Q)$ of $E_{m,u}$ are dense in $E_{m,u}(\R)$ provided $E_{m,u}(\Q)$ is 
    infinite and both connected components of $E_{m,u}(\R)$ contain a rational point. 
    Hence the corollary will follow by density, once these two conditions are proven.

    Now by Lemma~\ref{lem:rank-computations}, $E_{m,u}(\Q)$ has positive rank, so it is 
    infinite.
    And since the 3 non-trivial torsion points are given by
    \[
      \big(0,0\big), \quad \big(n,0\big), \quad \big(n(u^2 + 1),0\big)
    \]
   and are rational, we conclude that both components have rational points.
\end{proof}

\section{Hyperbolic heron triangles -- Side length parametrization}\label{sec:heron-sides}
\subsection{Finding the side lengths parametrization}
This section is devoted to the parametrization of hyperbolic Heron triangles using side lengths.
The arguments are quite similar to those of Section~\ref{sec:heron-angles}, but using the (dual) hyperbolic 
law of cosines for the side lengths.

Let $a,b,c$ denote the side lengths of a (non-degenerate bounded) hyperbolic triangle, and assume that 
$e^a, e^b, e^c$ are rational.
Let $\alpha$ (resp.\ $\beta, \gamma$) be the angles opposing the sides of length $a$ (resp.\ $b,c$).
By the law of cosines (for the side lengths)
\begin{equation}
    \label{eq:law-cos-2}
    \sinh(b) \sinh(c) \cos(\alpha) = \cosh(b)\cosh(c)- \cosh(a)
\end{equation}
and hence $\cos(\alpha)$ is rational (and thus also $\cos(\beta)$ and $\cos(\gamma)$ for similar reasons). 
%\mcom{This excludes degenerate cases where the $\sinh$ of a side is zero}

Multiplying the hyperbolic law of sines 
\[
  \frac{\sin(\alpha)} {\sinh(a)}= 
  \frac{\sin(\beta)}{\sinh(b)} = 
  \frac{\sin(\gamma)}{\sinh(c)}
\]
by $\sinh(a)\sinh(b)\sinh(c)$, we get 
\[
    \sin(\alpha) \sinh(b) \sinh(c) = \sin(\beta) \sinh(a) \sinh(c) = \sin(\gamma) \sinh(a) \sinh(b).
\]
Call this quantity $\Delta_2$. 
We have that $\Delta_2$ is rational if and only if $\sin(\alpha), \sin(\beta), \sin(\gamma)$ are rational.

As in Section~\ref{sec:heron-angles}, we square \eqref{eq:law-cos-2} to get
\[
    \sinh(b)^2 \sinh(c)^2(1 - \sin(\alpha)^2) = ( \cosh(b)\cosh(c)- \cosh(a))^2.
\]
Hence 
\[
    \Delta_2^2 = \sinh(b)^2 \sinh(c)^2 - (\cosh(b) \cosh(c)  - \cosh(a))^2 \in \Q.
\]  
As a side note, observe that if $A$ denotes the area of the triangle, we have
\[
    \sin(A) = -\sin(\alpha) \sin(\beta) \sin(\gamma) + \sin(\alpha) c_\beta c_\gamma + \sin(\beta) c_\alpha c_\gamma + \sin(\gamma) c_\alpha c_\beta
\]
where $c_\alpha = \cos(\alpha)$, etc...
Since $\frac{\Delta_2}{\sin(\alpha)} \in \Q$, and similarly for $\beta, \gamma$, we conclude that $\sin(A) = r\Delta_2$  for some $r \in \Q$. 
Thus a hyperbolic triangle with rational side lengths has rational area exactly when $\Delta_2 \in \Q$, i.e., when all its angles are rational.

Rewriting the equation for $\Delta_2^2$ in a symmetric way, we get:
\[
  \Delta_2^2 = 1 - \cosh(a)^2 - \cosh(b) ^2 - \cosh(c)^2 + 2 \cosh (a) \cosh (b) \cosh (c).
\]  
Letting $u = e^a, v = e^b$ and $w = e^c$ (so that $u,v,w \in \Q$), this equation rewrites as:
\[
    4 u^2 v^2 w^2 \Delta_2^2 = (uv - w) (uw - v) (vw - u) (uvw -1).
\]
We introduce the following change of variables
\[
y=\frac{2u(v^2-1)(w^2-1)(v^2w^2-1)\Delta_2}{vw(u-vw)^2}, \quad x=\frac{(v^2-1)(w^2-1)(uvw-1)}{vw(vw-u)},\\
\]
with inverse 
\begin{align*}
    \Delta_2 &=\frac{(v^2-1)(w^2-1)(v^2w^2-1)y}{2\big(x+(v^2-1)(w^2-1)\big)\big(v^2w^2x+(v^2-1)(w^2-1)\big)},\\
    u &=\frac{v^2w^2x+(v^2-1)(w^2-1)}{vw\big(x+(v^2-1)(w^2-1)\big)}.
\end{align*}

This leads to the desired equation:
\begin{equation}\label{eq:sides}
    y^2 = x\left(x-(v-v^{-1})^2\right) \left(x-(w-w^{-1})^2\right).
\end{equation}
Its discriminant is given by $2^4(v-v^{-1})^4(w-w^{-1})^4(v^{-1}w-vw^{-1})^2(vw-v^{-1}w^{-1})^2$. 

We can now proceed towards the proof of Theorem~\ref{th:heron-sides}.
\begin{proof}[Proof of Theorem~\ref{th:heron-sides}]
    It suffices to exhibit the open condition (B) that ensures the parameters give rise to a hyperbolic triangle.
    First, all the side lengths must be positive, whence $u >1$, $v>1$ and $w >1$.
    Second, the three triangle inequalities must be satisfied, i.e., $u < vw, v < uw$ and $w < uv$.
    Assuming $v,w$ are fixed and $>1$, we get the condition:
    \begin{equation} \label{eq:cond-B} \tag{B}
        \max\left(\frac{v}{w}, \frac{w}{v}\right) < u < vw.
    \end{equation}
    Under this condition, the above change of variable is always defined (and so is its inverse since $x$ is 
    necessarily positive).
    Thus the theorem is proven.
\end{proof}

\subsection{Rank Computations}%
\label{sub:sides-rank}
This section is similar to \ref{sub:angles-rank}.
Let $E_{v,w}$ denote the curve given by \eqref{eq:sides}, and let $E_v$ (resp.\ $E_w$) denote the same curve 
seen over $\C(w)$ (resp.\ $\C(v)$)).
Our goal is to give bounds on the rank of $E_v$ and $E_w$; since the equation for $E_{v,w}$ is symmetric in $v$ 
and $w$, it is enough to consider the curve $E_v$.
\begin{lemma}\label{lem:rank-computations-sides}
    The rank of the $K3$ surface $E_v(\C(w))$ satisfies
    \[
     1 \leq \quad \rk(E_v(\C(w)))\quad \leq 2.
    \]
    Moreover, the point 
    \[
      R(v,w) = \big(-vw(v-v^{-1})(w-w^{-1}),ivw(v-v^{-1})(w-w^{-1})(vw-v^{-1}w^{-1})\big)
    \]  
    is a point of infinite order on $E_{v,w}$.

    In addition, the torsion group is isomorphic to $\Z/4\Z\times\Z/2\Z$, generated by 
\[S_0(v,w)=\big((v-v^{-1})(w-w^{-1}),i(v-v^{-1})(w-w^{-1})(v^{-1}-w^{-1})(vw+1)\big)\]
and  \[S_1(v,w)=\big((v-v^{-1})^2, 0\big).\]

    \end{lemma}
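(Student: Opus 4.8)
The plan is to establish Lemma~\ref{lem:rank-computations-sides} by following the same template as the proof of Lemma~\ref{lem:rank-computations}, exploiting the symmetry in $v$ and $w$ to reduce to the curve $E_v$ over $\C(w)$. First I would verify the lower bound $\rk(E_v(\C(w))) \geq 1$ by exhibiting the point $R(v,w)$ and checking it lies on \eqref{eq:sides}: substituting $x = -vw(v-v^{-1})(w-w^{-1})$ into $x(x-(v-v^{-1})^2)(x-(w-w^{-1})^2)$ and confirming it equals the square of the $y$-coordinate is a routine (if tedious) algebraic identity. To see that $R$ has infinite order, I would compute its canonical height with respect to the elliptic fibration $E_v \to \mathbb{P}^1_w$ (equivalently $E_w \to \mathbb{P}^1_v$) and check it is nonzero — as in the companion lemma, one expects $h(R) = 2$; alternatively one notes directly that no multiple of $R$ is torsion by examining specializations.

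Next I would handle the upper bound $\rk(E_v(\C(w))) \leq 2$ via Tate's algorithm \cite[IV.9]{Silverman} and the Shioda–Tate formula. The discriminant $2^4(v-v^{-1})^4(w-w^{-1})^4(v^{-1}w-vw^{-1})^2(vw-v^{-1}w^{-1})^2$ tells me where to look: over $\C(w)$, the bad fibers sit above $w = 0, \infty$ (from the factor $(w-w^{-1})^4$, together with the behavior at infinity), above $w = \pm 1$, above $w = \pm v$ and above $w = \pm v^{-1}$. I would run Tate's algorithm at each to read off the Kodaira type — by analogy with the $u = 0,\infty$ fibers in Lemma~\ref{lem:rank-computations} one expects types like $I_4$ (or $I_0^*$) at $w=0,\infty$, and types $I_2$ or $I_0^*$ at the remaining places — then feed the component counts into
\[
\rho(E_v) = \rk(E_v(\C(w))) + 2 + \sum_{\text{bad }w}(m_w - 1),
\]
where $m_w$ is the number of components of the fiber above $w$. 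Since $E_v$ is a $K3$ surface, $\rho(E_v) \leq 20$, and the sum of $(m_w-1)$ will come out to $18$, forcing $\rk \leq 2$.

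For the torsion statement I would first check that $S_1(v,w) = ((v-v^{-1})^2, 0)$ is visibly a $2$-torsion point (it is one of the three roots of the cubic), and verify by direct substitution that $S_0(v,w)$ lies on \eqref{eq:sides} and satisfies $2S_0 = S_1$, so that $S_0$ has order $4$; together $S_0$ and $S_1$ generate a subgroup $\cong \Z/4\Z \times \Z/2\Z$. To see the torsion is exactly this, I would invoke the Euler characteristic $\chi = 2$ for K3 surfaces together with the rank bound and Table (4.5) in \cite{MirandaPersson}, exactly as in the previous lemma, to conclude the torsion is $\Z/4\Z \times \Z/2\Z$ or a group containing it; the explicit $4$-torsion point then pins it down, and one checks no further $2$-divisibility is possible by a specialization argument.

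The main obstacle I anticipate is the bookkeeping in Tate's algorithm: determining the precise Kodaira types at $w = \pm v$ and $w = \pm v^{-1}$ requires care because these are the "interesting" fibers where the roots of the cubic collide in a nontrivial way, and getting the component counts wrong would break the Shioda–Tate bound. A secondary subtlety is the behavior at $w = \infty$: one must move to the correct Weierstrass model there (clearing denominators in $(w - w^{-1})$) before applying the algorithm, so that the fiber type is read off correctly. Everything else — the height computation showing $h(R) = 2$ and the verification $2S_0 = S_1$ — is a finite, mechanical check that I would relegate to "one verifies directly."
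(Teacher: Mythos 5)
Your overall route is the same as the paper's: exhibit $R$ and check it has infinite order (the paper does this via the height pairing, though note it finds $h(R)=\tfrac12$, not $2$ --- only nonvanishing matters), then bound the rank by running Tate's algorithm on the bad fibers and plugging the component counts into Shioda--Tate together with $\rho \leq 20$ for a K3, and finally pin down the torsion with the explicit points plus the Euler characteristic and Table (4.5) of Miranda--Persson. For the record, the paper's fiber types are $I_4$ at $w=0,\infty,\pm 1$ and $I_2$ at $w=\pm v,\pm v^{-1}$, so $\sum_w (m_w-1)=4\cdot 3+4\cdot 1=16$; your statement that this sum ``comes out to $18$'' conflates it with the total $2+16=18$, and taken literally with your displayed formula it would force rank $\leq 0$, contradicting the lower bound. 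That is a slip of bookkeeping, not of method.

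There is, however, a concrete error in your torsion step. You propose to verify $2S_0=S_1$ and deduce that $S_0,S_1$ generate $\Z/4\Z\times\Z/2\Z$. Both halves of this are wrong: on $y^2=x\bigl(x-(v-v^{-1})^2\bigr)\bigl(x-(w-w^{-1})^2\bigr)$, a point doubles to $(0,0)$ exactly when its $x$-coordinate squares to the product $(v-v^{-1})^2(w-w^{-1})^2$ of the other two roots, and $x(S_0)=(v-v^{-1})(w-w^{-1})$ does precisely that, so in fact $2S_0=(0,0)\neq S_1$. Moreover, if one did have $2S_0=S_1$, then $S_1\in\langle S_0\rangle$ and the subgroup generated would be cyclic of order $4$, not $\Z/4\Z\times\Z/2\Z$; the correct verification is that $S_0$ has order $4$ with $2S_0=(0,0)$ and that the $2$-torsion point $S_1$ does not lie in $\langle S_0\rangle$. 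With that fix (and the specialization/Miranda--Persson argument you already invoke to rule out larger torsion, as the paper does), your plan goes through and matches the paper's proof.
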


\begin{proof}
The proof of this result is very similar to that of Lemma \ref{lem:rank-computations}. As before, 
the lower bound for the rank follows from the fact that $R$ is on the curve and of infinite order, 
    which is easily verified by inspection. More formally, one can see that the height pairing of the Mordell--Weil group on $E_v$  gives $h(R)=\frac{1}{2}$.
    
  We observe that $E_v$ has singularities at $w=0, \infty,\pm 1$ of type $I_4$ in the Kodaira classification, while the singularities at $w=\pm v, \pm v^{-1}$ are of type $I_2$. By the Shioda--Tate formula, we have
 \[ \rho(E_v)=\mathrm{rk}E_v(\C(w))+2+4\cdot(4-1)+4\cdot(2-1)=\mathrm{rk}E_v(\C(w))+18.\]
    Since $E_v$ is a $K3$ surface, we have ${\rho(E_v) \leq 20}$, and thus 
    $\rk(E_v(\C(w))) \leq 2$.

One can directly check that  $S_0$ and $S_1$ generate a subgroup isomorphic to $\Z/4\Z\times\Z/2\Z$. Combining the information about the rank and the Euler characteristic with  Table (4.5) in \cite{MirandaPersson} we conclude that the torsion group is given exactly by  $\Z/4\Z\times \Z/2\Z$. 
\end{proof}

Even if Lemmas \ref{lem:rank-computations} and \ref{lem:rank-computations-sides} are very similar from the point of view of the arithmetic of the involved $K3$ surfaces, they contain a fundamental difference for the geometric problem. In the case of  Lemma \ref{lem:rank-computations}, the point $P(m,u)$ is defined over $\Q(m,u)$ and generates a nontrivial solution to the question of the Heron hyperbolic triangle with given area and angle. In contrast, the point $R(v,w)$ of Lemma \ref{lem:rank-computations-sides} is certainly not defined over $\Q(v,w)$, and we speculate that there is no point of infinite order over $\Q(v,w)$. This results in the following: we do not know a priori whether there is a Heron hyperbolic triangle with given sides $v$ and $w$. This will depend on the choice of $v$ and $w$, much like the classical congruent number problem depends on the choice of the area.

\section{Equilateral Triangles} \label{sec:equilateral}
This short section covers the specific case of (non-degenerate bounded) equilateral triangles.
\begin{proposition}
    There are no equilateral hyperbolic Heron triangles.
\end{proposition}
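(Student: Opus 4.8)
The plan is to reduce the existence of an equilateral hyperbolic Heron triangle to that of a rational point with $X>1$ on a single elliptic curve, and then to exclude this by a rank computation.

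First I would set up the equilateral case. A non-degenerate bounded equilateral triangle has all three sides equal to a common length $a>0$, and the Heron hypothesis forces $v:=e^a\in\Q$ with $v>1$; in particular $\cosh(a)=\frac{v^2+1}{2v}\in\Q$ and $\cosh(a)>1$. Its three angles are equal to a common $\alpha\in(0,\tfrac\pi3)$, and the hyperbolic law of cosines \eqref{eq:law-cos-2} with $a=b=c$ collapses to $\sinh(a)^2\cos(\alpha)=\cosh(a)^2-\cosh(a)$, whence
\[
  \cos(\alpha)=\frac{\cosh(a)}{\cosh(a)+1}\in\Q
\]
automatically. Since the area is $A=\pi-3\alpha$, we have $e^{iA}=-e^{-3i\alpha}$, so $e^{iA}\in\Q[i]$ as soon as $e^{i\alpha}\in\Q[i]$. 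Hence such a triangle is Heron exactly when, in addition, $\sin(\alpha)\in\Q$.

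Next I would translate this last condition. From the formula for $\cos(\alpha)$,
\[
  \sin(\alpha)^2=1-\cos(\alpha)^2=\frac{2\cosh(a)+1}{(\cosh(a)+1)^2},
\]
and since $(\cosh(a)+1)^2$ is a square, $\sin(\alpha)\in\Q$ if and only if $2\cosh(a)+1=v+v^{-1}+1$ is the square of a rational, say $v+v^{-1}+1=s^2$. Multiplying through by $v^2$ and setting $X=v$, $Y=sv$ gives
\[
  Y^2=X^3+X^2+X=X(X^2+X+1),
\]
so $(X,Y)$ is a rational point of the elliptic curve $E\colon Y^2=X(X^2+X+1)$ with $X=v>1$. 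Conversely every rational point of $E$ with $X\neq 0$ yields such a $v$. Thus the Proposition is equivalent to the statement that $E(\Q)$ has no point with $X>1$.

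Finally I would determine $E(\Q)$. The quadratic $X^2+X+1$ is irreducible over $\Q$, so the only rational $2$-torsion point of $E$ is $(0,0)$; and for $P=(X_0,Y_0)\in E(\Q)$ the duplication formula gives $X$-coordinate $(X_0^2-1)^2/(4Y_0^2)$ for $2P$, which vanishes only when $X_0=\pm1$, i.e.\ $Y_0^2\in\{3,-1\}$ — impossible. Hence $(0,0)\notin 2E(\Q)$, there is no rational point of order $4$, and $E(\Q)_{\mathrm{tors}}\cong\Z/2\Z$. A standard $2$-descent (via the $2$-isogeny with kernel $(0,0)$, whose target is $Y^2=X(X-3)(X+1)$) shows that $E$ has rank $0$, so $E(\Q)=\{\mathcal{O},(0,0)\}$; neither point has finite $X\neq 0$, let alone $X>1$, which proves the Proposition. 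The only non-routine ingredient is the rank-$0$ assertion — equivalently, the classical fact that $k^4+k^2\ell^2+\ell^4$ is never a nonzero perfect square for coprime integers $k,\ell$ (provable by Fermat descent); everything else is direct trigonometric manipulation.
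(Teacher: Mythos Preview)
Your proof is correct and lands on the very same elliptic curve $Y^2=X(X^2+X+1)$ as the paper, with the same endgame (rank $0$, sole nontrivial torsion point $(0,0)$). The derivation, however, is genuinely different. The paper starts from rational \emph{angles}, invokes the $\Delta_1$-condition of Section~\ref{sec:heron-angles} (which encodes $\sinh a\in\Q$), first obtains the quartic $v^2=-u^4-2u^2+3$ in the auxiliary variable $u$ with $u^2=2\cos\alpha-1$, and only then passes to Weierstrass form. You start instead from a rational \emph{side} $v=e^a$, observe that $\cos\alpha=\cosh a/(\cosh a+1)\in\Q$ automatically, and reduce the remaining condition $\sin\alpha\in\Q$ directly to $v+v^{-1}+1$ being a rational square, which after clearing denominators is already the cubic. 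Your route is shorter and independent of the Section~\ref{sec:heron-angles} machinery; the paper's has the virtue of illustrating that machinery in a concrete case. One small remark: the implication ``no rational $4$-torsion and a single rational $2$-torsion point, hence $E(\Q)_{\mathrm{tors}}\cong\Z/2\Z$'' does not by itself exclude odd torsion; a one-line check such as $|E(\F_5)|=8$ closes this. (The paper, for its part, simply asserts the rank and torsion without argument, so your treatment is in fact more detailed.)
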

\begin{proof}
   Let $\alpha$ denote the angle of an equilateral hyperbolic triangle.
   The Heron condition \eqref{eq:Heroncondition} from Section~\ref{sec:heron-angles} in this case is:
   \[
       \Delta_1^2 = 2 \cos(\alpha)^3 + 3 \cos(\alpha)^2 -1 = (2\cos(\alpha)-1)(\cos(\alpha)+1)^2.
   \]
   Setting $u = \frac{\Delta_1}{\cos(\alpha)+1}$, this equation rewrites as $u^2 = 2 \cos(\alpha)-1$.
   Thus the solutions to the original equation are parametrized by 
   $\cos(\alpha) = \frac{u^2 + 1}{2}$ and $\Delta_1 = \frac{u^3 + 3u}{2}$, for $u \in \Q$.

   Now squaring the equation for $\cos(\alpha)$, and setting $v= 2\sin(\alpha)$, we get 
   \[
      v^2 = -u^4 -2u^2 +3. 
   \]
   Making the change of variables $u = \frac{x-1}{x+1},v = \frac{4y}{(x+1)^2}$ (with inverse 
   $x = \frac{1+u}{1-u}, y=\frac{v}{(u-1)^2}$), we get the Weierstrass form
   \[
      y^2 = x(x^2 + x + 1).
   \] 
   This has rank 0, and the only nontrivial torsion point is $(0,0)$ which does not give an actual triangle since $v=0$.
\end{proof}

We proceed to prove the second part of Proposition~\ref{prop:equilateral}:
\begin{proposition}
    If an equilateral hyperbolic triangle has either rational side lengths 
    or rational angles, then it has no rational median/area bisector.
\end{proposition}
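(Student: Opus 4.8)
The plan is to exploit the fact that in an equilateral triangle the median and the area bisector coincide (they are the line of symmetry through the apex), so it suffices to rule out a rational such line. First I would recall from the first Proposition that an equilateral hyperbolic triangle with rational angle $\alpha$ has $\cos(\alpha) = \frac{u^2+1}{2}$ for some $u \in \Q$; note however that this same quantity $\cos(\alpha)$ is what measures the side length, since the hyperbolic law of cosines for the side length $a$ in an equilateral triangle reads $\cosh(a) = \frac{\cos(\alpha)}{1 - \cos(\alpha)}$ (equivalently, by \eqref{eq:law-cos-1} with $\alpha=\beta=\gamma$). Thus ``rational angle'' and ``rational side length'' are (for the equilateral case) linked by the relation $\cosh(a) = \frac{\cos(\alpha)}{1-\cos(\alpha)}$, and I would treat both hypotheses through the length $a$: ``rational angle'' forces $\cosh(a) \in \Q$ and ``rational side length'' forces $e^a \in \Q$ (hence $\cosh(a),\sinh(a) \in \Q$), so in either case I may assume at least $\cosh(a) \in \Q$, and work out the stronger consequence separately where needed.

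Next I would write down the length $\mu$ of the symmetry line from the apex to the midpoint of the opposite side. Drop this line; it splits the equilateral triangle into two congruent right triangles, each with a right angle at the foot, hypotenuse $a$, one leg $a/2$ (half of the base side), the other leg $\mu$, and the angle at the apex equal to $\alpha/2$. Applying hyperbolic Pythagoras $\cosh(a) = \cosh(a/2)\cosh(\mu)$ gives
\[
  \cosh(\mu) = \frac{\cosh(a)}{\cosh(a/2)}.
\]
Using $\cosh(a) = 2\cosh^2(a/2) - 1$, set $c = \cosh(a/2)$; then $\cosh(\mu) = \frac{2c^2-1}{c}$. For the median/area bisector to be rational in the sense of this paper we need $e^\mu \in \Q$ when the side lengths are rational, and $\cos$/$\sin$ of the relevant angle rational — but for a \emph{length} the relevant condition is $e^\mu \in \Q$, equivalently $\cosh(\mu)$ and $\sinh(\mu)$ both rational. (I would state at the outset which notion of ``rational median/area bisector'' is in force — presumably $e^\mu \in \Q$, matching the Hartshorne--van Luijk convention used throughout.)

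The heart of the argument is then a descent/parametrization showing the resulting system has no admissible rational point. From $\cosh(\mu) = \frac{2c^2-1}{c}$ and $\sinh^2(\mu) = \cosh^2(\mu) - 1 = \frac{(2c^2-1)^2 - c^2}{c^2} = \frac{(2c^2 - c - 1)(2c^2 + c - 1)}{c^2} = \frac{(c-1)(2c+1)(2c-1)(c+1)}{c^2}$, rationality of $\sinh(\mu)$ becomes the requirement that
\[
  (c^2-1)(4c^2-1)
\]
be a rational square. In the rational-angle case one also has $\sinh(a) \in \Q$ only under the stronger hypothesis; in the rational-side case $c = \cosh(a/2)$ need not itself be rational, but $c^2 = \frac{\cosh(a)+1}{2}$ is, so I would substitute $C = c^2 = \cosh^2(a/2) \in \Q$ and rewrite the square condition as: $C(4C-1)(\text{something})$ a square, being careful that $\sqrt{C} = \cosh(a/2)$ need not be rational while $\sinh(a/2)$ might not be either — here I would instead parametrize directly via $e^{a/2}$, writing $e^{a/2} = p$ with $p^2 \in \Q$ is not forced, so more cleanly: set $e^a = r \in \Q$, $\cosh(a) = \frac{r+r^{-1}}{2}$, $\cosh(a/2)\cosh(\mu) = \cosh(a)$, and impose $e^\mu \in \Q$. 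Clearing denominators turns ``$e^\mu \in \Q$ and $e^a \in \Q$'' into a single curve in two variables; I expect this curve (after a Weierstrass change of variables analogous to the ones in Sections~\ref{sec:heron-angles}--\ref{sec:heron-sides}) to be an elliptic curve of rank $0$ whose only rational points are torsion points corresponding to degenerate configurations ($\mu = 0$, or $a = 0$, or a vertex at infinity). Similarly, in the rational-angle case, combining $\cos(\alpha) = \frac{u^2+1}{2}$, $\cosh(a) = \frac{\cos\alpha}{1-\cos\alpha}$, and rationality of $\cosh(\mu),\sinh(\mu)$ yields another such curve, which I would likewise reduce to Weierstrass form and cite as having rank $0$ with only degenerate torsion.

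The main obstacle is precisely identifying the right curve and verifying rank $0$: one must pin down the correct birational model (there is some freedom in how to introduce the auxiliary square-root variables, and a careless choice produces a genus-$1$ curve without an obvious rational point, or a higher-genus curve), and then compute (or cite a computation, e.g. via \texttt{2}-descent) that the rank vanishes and that the finitely many torsion points all correspond to $\mu=0$, $a=0$, or an ideal vertex — none of which is an admissible non-degenerate bounded hyperbolic Heron (resp.\ rational-sided or rational-angled) triangle. A clean way to organize this is to observe that the median-rationality condition $(c^2-1)(4c^2-1) = \square$ together with $c^2 \in \Q$ already cuts out a rank-$0$ elliptic curve by a change of variables $x = 4c^2$, which I would verify has Weierstrass form $y^2 = (x-4)(x-1)x$ up to scaling — a curve of conductor a small integer with rank $0$ and torsion $(\Z/2\Z)^2$ — and then check that each $2$-torsion point forces a degeneracy. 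Handling the two hypotheses (rational sides versus rational angles) amounts to two such short computations, and the Proposition follows.
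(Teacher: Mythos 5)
Your proposal is correct in substance and follows the same geometric reduction as the paper: drop the symmetry line, observe that for equilateral triangles median and area bisector coincide, use hyperbolic Pythagoras $\cosh(a)=\cosh(a/2)\cosh(\mu)$ in the half right triangle, and reduce rationality of the median to a rank-$0$ elliptic curve whose torsion points are all degenerate. Where you differ from the paper is in the arithmetic packaging, and the difference is a genuine (mild) improvement: the paper treats the two hypotheses separately --- for rational sides it deduces $\cosh(a/2)\in\Q$ from $\cosh(\mu)\in\Q$, for rational angles it uses $\sin(\alpha/2)\cosh(\mu)=\cos(\alpha)$, and both lead to the quartic $s^2=4p^4-5p^2+1$ ($p=\cosh(a/2)$ resp.\ $\sin(\alpha/2)$) and thence to $y^2=x^3+10x^2+9x$, rank $0$ with torsion $\Z/4\Z\times\Z/2\Z$. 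You instead unify the two cases via $\cosh(a)=\cos(\alpha)/(1-\cos(\alpha))$, so that only $C=\cosh^2(a/2)=(\cosh(a)+1)/2\in\Q$ and $\sinh(\mu)\in\Q$ are used; this lands you on $y^2=x(x-1)(x-4)$ with $x=4C$, which is exactly the quotient of the paper's curve by the $2$-isogeny through $(0,0)$, so its rank is also $0$ and its torsion is $(\Z/2\Z)^2$, and the three $2$-torsion points give $C\in\{0,\tfrac14,1\}$, all degenerate since $C=\cosh^2(a/2)\geq 1$ with equality only for $a=0$. So your argument, once the rank/torsion of that single curve is verified (a standard $2$-descent, asserted without proof in the paper for its curve as well), proves the proposition and in fact a marginally stronger statement, since it never needs $\cosh(\mu)\in\Q$ nor $\cosh(a/2)\in\Q$. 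Two small points to fix in the write-up: the condition you should impose is $C(C-1)(4C-1)\in\Q^{2}$ (equivalently $\sinh^2(\mu)=(C-1)(4C-1)/C$ is a rational square), not ``$(c^2-1)(4c^2-1)$ a square'' --- the extra factor $C$ is what makes your stated Weierstrass model $y^2=x(x-1)(x-4)$ the right curve; and the exploratory middle paragraph about whether $c$ or $e^{a/2}$ is rational can simply be deleted, since your final formulation needs neither.
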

\begin{proof}
  First observe that for equilateral triangles, mediators, bisectors, medians, and area bisectors all coincide,
  so we are free to use any property of these we want.
  Consider an equilateral hyperbolic triangle of side lengths $a$ and angles $\alpha$.
  Let $m$ denote the length of the median, and consider the half triangle defined by one median.
  This triangle has angles $\alpha, \frac{\alpha}{2}, \frac \pi 2$ and sides $a, \frac a 2, m$.

  First, assume the length $a$ is rational, i.e., that $e^a \in \Q$.
  By Pythagoras' theorem, $\cosh(m) \cosh(\frac a 2) = \cosh(a)$, so that $\cosh(m) \in \Q$ if and only 
  if $p = \cosh(\frac a 2) \in \Q$.
  Let $t = \sinh(m)$.
  Squaring Pythagoras' formula, we get the following equation for $t$:
  \[
      (1 + t^2) p^2 = (2 p^2 -1)^2 \quad \text{i.e.} \quad s^2 = 4 p^4 - 5 p^2 + 1, 
  \]
  writing $s = pt$.
  Changing variables 
 \[ s=\frac{9-x^2}{8x},\quad p=\frac{y}{4x}, \quad y=4p(-4s+8p^2-5),\quad x=-4s+8p^2-5.\]
  we get the following elliptic curve: 
  \[
    y^2 = x^3 + 10x^2 + 9x.
  \]
  The curve has rank 0, and the torsion is given by 
 \[
   E(\Q)_\mathrm{tors}=\langle (-3,6), (-1,0)\rangle \cong \Z/4\Z\times \Z/2\Z.
 \]
 Since we are looking for solutions with $p\not =0$, we need $y\not =0$. The only torsion points to consider are therefore $(-3,\pm 6)$ and $(3,\pm 12)$. However, we also need $t\not = 0$, leading to $s\not=0$ and $x\not = \pm3$. Therefore, there are no solutions. 
  
  Now assuming the angle $\alpha$ to be rational (i.e. $e^{i\alpha} \in \Q[i]$), the situation is entirely 
  similar.
  From the law of cosines (for the angles) we get: $\sin(\frac \alpha 2) \cosh(m) = \cos(\alpha)$, whence 
  $\cosh(m) \in \Q$ if and only if $p = \sin(\frac \alpha 2) \in \Q$.
  We set $t = \sinh(m)$ and square the equation to get the same equation as before:
  \[
    (1 + t^2) p^2 = (2 p^2 -1)^2.
  \]
  Thus also in this case, the median cannot be rational.
\end{proof}

\section{Rational medians} \label{sec:medians}

The goal of this section is to study hyperbolic triangles with one rational median, in the same 
spirit as Euler's problem \cite{Euler}.
Consider a (non-degenerate bounded) hyperbolic triangle with sides $a,b,c$ having opposite angles $\alpha, \beta, \gamma$ (by abuse of 
notation we will let $a,b,c$ also denote the length of the sides).
Let $m$ denote the  median at angle $\alpha$, cutting side $a$ into two equal parts.
Denote by $\theta$ the angle at the intersection of $m$ and $a$, on the side of $\beta$; the one on the side of 
$\gamma$ is $\pi - \theta$.
Applying the cosine theorem in the two triangles, we get:
\begin{align*}
    \cosh(b) &= \cosh(m) \cosh(a/2) - \sinh(m) \sinh(a/2) \cos(\pi-\theta) \\
    \cosh(c) &= \cosh(m) \cosh(a/2) - \sinh(m) \sinh(a/2) \cos(\theta) 
\end{align*}
and thus 
\[  
    2\cosh(m) \cosh(a/2) = \cosh(b) + \cosh(c).
\]

Let us now assume that $a,b,c$ are rational side lengths, i.e., that $e^a, e^b, e^c \in \Q$.
In order for $\cosh(m)$ to be rational, it is thus necessary and sufficient that $\cosh(a/2)$ be rational.
Since $e^a \in \Q$, this is equivalent to $e^{a/2}\in \Q$.
For $\sinh(m) \in \Q$ we get the following condition from the above equation:
\[
    \left(\cosh(b) + \cosh(c)\right)^2 - 4 \cosh(a/2)^2 =4\sinh^2(m)\cosh^2(a/2)=\text{square}.
\]
Setting $u=e^{a/2}, v=e^{b}, w=e^{c}$, we need to solve
\[
(v^2w+w+vw^2+v)^2u^2-4v^2w^2(u^2+1)^2= t^2
\]
for $t,u,v,w \in \Q$.
Now applying the change of variables 
\[
    x  = 2w \big[ u^2w v^2 + u^2 (w^2 + 1) v -2u^4w - 3u^2w + ut - 2w \big]
\]
\begin{align*}
    y = 2uw \big [ &2u^2 w^2 v^3 + 3 w  u^2 (w^2 + 1) v^2  + 
      (-4u^4w^2 + u^2w^4 - 4u^2w^2 + 2uwt + u^2 - 4w^2) v 
  \\ &+  u (w^2 + 1) (uw + t) \big]
\end{align*}
with inverse
\[v=-\frac{1}{2xuw} \big[4  u  w^2  (w^2 + 1)  (u^2 + 1)^2 + u(w^2 + 1)x - y \big]\]
%{4u^5w^4+8u^3w^4+4uw^4+4u^5w^2+8u^3w^2+xuw^2+4uw^2+xu-y}{2xuw}\]
\begin{align*}
    t = -\frac{1}{4x^2uw} \big[ & - x^3 + 8 w^2 (u^2 + 1)^2 (2w^2(u^2+1)^2+(w^4+1)u^2) x \\
      &- 8  u  w^2  (w^2 + 1) (u^2 + 1)^2 y + 32  u^2  w^4  (w^2 + 1)^2  (u^2 + 1)^4\big],
\end{align*}
we get the equation
\begin{align} \label{eq:rat-median}
    y^2 & =x^3+(u^2w^4+2(4u^4+7u^2+4)w^2+u^2)x^2 \\
 \nonumber       & +8(u^2+1)^2w^2(u^2w^4+2(u^2+1)^2w^2+u^2)x +16u^2w^4(u^2+1)^4(w^2+1)^2.
\end{align}

If, similarly as previously, we let $E_{u,w}$ denote the elliptic curve given by \eqref{eq:rat-median} seen 
over $\Q$, where $u,w \in \Q$ are parameters, we get:
\begin{theorem}
    A hyperbolic triangle with rational sides $a = 2 \log(u)$, $b = \log(w)$ has a rational median 
    (intersecting side $a$) if and only if it corresponds (using the above change of variables)
    to a rational point on the elliptic curve $E_{u,w}$.
\end{theorem}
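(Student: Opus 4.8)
The plan is to extract the statement from the cosine-rule computation carried out just above; what remains is to check that the displayed change of variables is a bona fide birational isomorphism over $\Q$ between the affine surface cut out by the pre-Weierstrass equation in $t,u,v,w$ and the curve \eqref{eq:rat-median}. First I would record the equivalence at the level of algebra. For a non-degenerate bounded hyperbolic triangle with rational side lengths, put $u=e^{a/2}$ and let $v,w\in\Q$ be the exponentials of the other two sides; the identity $2\cosh(m)\cosh(a/2)=\cosh(b)+\cosh(c)$, together with $\cosh(a/2)=(u+u^{-1})/2\neq 0$, forces $\cosh(m)\in\Q$, and squaring it shows that $\sinh(m)\in\Q$ if and only if the quantity $t$ occurring above---a fixed nonzero rational multiple of $\sinh(m)$---makes $(v^2w+w+vw^2+v)^2u^2-4v^2w^2(u^2+1)^2$ equal to $t^2$. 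Hence a rational median meeting side $a$ exists precisely when, for the prescribed values of $u$ and $w$, there is a rational point $(t,v)$ on that affine surface.

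Next I would verify the two directions of the correspondence. Forward: substituting a rational quadruple $(t,u,v,w)$ satisfying the pre-Weierstrass equation into the polynomial formulas for $x$ and $y$ produces a point of $E_{u,w}(\Q)$, and one checks that it lies on \eqref{eq:rat-median}---the lengthy but mechanical polynomial identity that the substitution was designed to satisfy, best confirmed by computer algebra. Backward: the displayed inverse expresses $v$ and $t$ as rational functions of $(x,y)$ with denominators divisible by $x$, so it is defined on the complement of the fibre $\{x=0\}$ together with the point at infinity---a finite set of points of $E_{u,w}$---and there it returns rational values of $v$ and $t$. As in the proofs of Theorems~\ref{th:heron-angles} and~\ref{th:heron-sides}, I would then record that these two rational maps are mutually inverse wherever both are defined, so that away from this exceptional locus the correspondence is a genuine bijection: a rational point of $E_{u,w}$ off that locus for which the recovered $v$ satisfies $v>1$ and the two triangle inequalities---the analogue here of the open conditions \eqref{eq:cond-A} and \eqref{eq:cond-B}---yields, via $\cosh(m)=(\cosh b+\cosh c)/(2\cosh(a/2))$ and the relation between $t$ and $\sinh(m)$, an actual triangle with a rational median, and conversely.

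The main obstacle is bookkeeping rather than anything conceptual: the algebraic identities (that the substitution lands on \eqref{eq:rat-median}, and that the two displayed maps are mutually inverse) are routine to state but tedious by hand, and one must track the finitely many exceptional points---those with $x=0$ or at infinity, and those violating positivity or a triangle inequality---that are to be discarded, exactly as for the angle and side-length parametrizations. Since the theorem asserts only a correspondence ``using the above change of variables,'' rather than an explicit open condition in the style of \eqref{eq:cond-A}, it suffices to exhibit the map and its inverse and to observe that each non-degenerate triangle with rational median determines a well-defined rational point of $E_{u,w}$ from which it is recovered; the precise description of the image subset of $E_{u,w}(\Q)$ is then the analogue of, and no harder than, condition \eqref{eq:cond-A}.
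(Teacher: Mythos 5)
Your proposal is correct and follows essentially the same route as the paper: the paper gives no separate proof for this theorem, since it is exactly the summary of the preceding computation (the identity $2\cosh(m)\cosh(a/2)=\cosh(b)+\cosh(c)$, the squaring step producing the surface in $t,u,v,w$, and the displayed change of variables with its inverse onto \eqref{eq:rat-median}). Your additional bookkeeping about the exceptional locus $x=0$ and positivity conditions is harmless but not needed for the statement as phrased, since in both directions the triangle is given and $t$ can be recovered rationally from the forward formula (linear in $t$ with coefficient $2uw\neq 0$).
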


Let $E_u$ denote the curve $E_{u,w}$ seen over $\C(u)$.
As previously, we have the following lemma, a weaker analogue to 
Lemmas~\ref{lem:rank-computations} and \ref{lem:rank-computations-sides}.
\begin{lemma} \label{lem:rank-medians}
    The rank of the $K3$ surface $E_u$ satisfies 
    \[
      1 \leq \rk(E_u(\C(w))) \leq 4.
    \]
    Moreover the point 
    \[
        P(u,w) = \big(0,4u(u^2+1)^2w^2(w^2+1)\big)
    \]
    is a point of infinite order on the curve.
\end{lemma}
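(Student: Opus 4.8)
The plan is to follow the template of Lemmas~\ref{lem:rank-computations} and \ref{lem:rank-computations-sides}: the lower bound comes from exhibiting an explicit non-torsion section, and the upper bound from running Tate's algorithm on \eqref{eq:rat-median} over $\C(w)$ to read off the Kodaira types of the singular fibres of the elliptic surface $E_u \to \mathbb{P}^1_w$, then combining the Shioda--Tate formula with the bound $\rho(E_u) \leq 20$ valid for $K3$ surfaces.

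For the lower bound, one first checks that $P(u,w) = \big(0,\, 4u(u^2+1)^2 w^2(w^2+1)\big)$ lies on \eqref{eq:rat-median}: setting $x = 0$, the right-hand side becomes $16 u^2 w^4 (u^2+1)^4 (w^2+1)^2$, which is precisely the square of the proposed $y$-coordinate. That $P$ has infinite order is then verified by inspection, or more formally by computing the canonical height pairing on the Mordell--Weil lattice of $E_u$ and checking that $h(P) > 0$; alternatively one may specialize $u$ and $w$ to convenient rational values, verify that the resulting point on the elliptic curve over $\Q$ is non-torsion (via reduction modulo a prime, say), and invoke the specialization theorem \cite{Silverman} to deduce that $P(u,w)$ is non-torsion for $u$ transcendental. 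This gives $\rk(E_u(\C(w))) \geq 1$.

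For the upper bound I would compute the discriminant $\Delta(w) \in \C(u)[w]$ of \eqref{eq:rat-median}, factor it, and apply Tate's algorithm at each of its roots together with the place $w = \infty$ (after the usual change of coordinates there) to determine the Kodaira type and the number $m_v$ of components of each singular fibre; for instance at $w = 0$ the equation degenerates to $y^2 = x^2(x + u^2)$, so the reduction is multiplicative and the fibre has type $I_n$ with $n = \mathrm{ord}_{w=0}\Delta$. As a consistency check the Euler numbers of the fibres must sum to $24$, since $E_u$ is a $K3$ surface. The Shioda--Tate formula then reads $\rho(E_u) = \rk(E_u(\C(w))) + 2 + \sum_v (m_v - 1)$, and together with $\rho(E_u) \leq 20$ this yields $\rk(E_u(\C(w))) \leq 18 - \sum_v (m_v - 1)$; the claim amounts to showing that the fibre configuration contributes $\sum_v (m_v - 1) \geq 14$. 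This last fibre analysis is the step I expect to be the main obstacle, and it is the reason the bound here ($\leq 4$) is weaker than the bound $\leq 2$ in the two previous lemmas: since \eqref{eq:rat-median} is a generic Weierstrass cubic rather than a curve of the convenient shape $y^2 = x(x-a)(x-b)$ with full rational $2$-torsion, its discriminant does not factor transparently and the singular fibres are expected to be of smaller type (more $I_1$ and $I_2$, fewer $I_0^*$); carefully distinguishing the $I_n$ fibres by orders of vanishing of $\Delta$, handling the fibre at infinity, and checking that $\sum_v (m_v - 1)$ reaches $14$ is where the real work lies. We do not need, and will not attempt, to identify the torsion subgroup of $E_u$.
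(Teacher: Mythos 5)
Your proposal is correct and follows essentially the same route as the paper: the explicit point $P(u,w)$ (whose $y$-coordinate squares to the constant term of \eqref{eq:rat-median}) gives the lower bound, and the upper bound comes from the Kodaira types of the singular fibres via Shioda--Tate with $\rho(E_u)\leq 20$. The fibre analysis you flag as the remaining work comes out exactly as you anticipate: type $I_8$ at $w=0,\infty$ and $I_1$ at the eight remaining roots of the discriminant, so $\sum_v(m_v-1)=2\cdot 7=14$ and $\rk(E_u(\C(w)))\leq 4$.
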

\begin{proof}
   The proof is entirely similar to that of  
   Lemmas~\ref{lem:rank-computations} and \ref{lem:rank-computations-sides}.
   The discriminant of $E_{u,w}$ is given by:
\begin{align*}
2^{12} u^4 w^8(u^2+1)^4(uw^2 + u -2(u^2+u+1)w )(uw^2 + u -2(u^2-u+1)w )\\
\times(uw^2 + u +2(u^2+u+1)w)(uw^2 + u +2(u^2-u+1)w).
\end{align*}
Looking at the Kodaira classification, we observe that $E_u$ has singularities of type $I_8$ at $w=0, \infty$, 
and of type $I_1$ for all the 8 others.
Thus the Shioda--Tate formula gives
\[
\rho(E_u)=\rk(E_u(\C(w)))+2+2\cdot (8-1)+8\cdot(1-1)=\rk(E_u(\C(w)))+16.
\]
Since $E_u$ is a $K3$ surface, we have $\rho(E_u) \leq 20$, and thus $\rk(E_u(\C(w))) \leq 4$.

The lower bound now follows from the face that $P(u,w)$ is of infinite order, which can be verified by 
direct computation.
%\mcom{The analysis over $E_w$ only gets a bound of 6 for the rank...}
\end{proof}

\section{Area bisectors} \label{sec:area-bisectors}

This section is similar to Section~\ref{sec:medians}, but focussing on hyperbolic triangles with rational 
area bisectors, instead of medians. 

Consider a hyperbolic triangle with sides $a,b,c$ having opposite angles 
$\alpha, \beta, \gamma$.
Let $m$ denote the  area bisector at angle $\alpha$, cutting $\alpha$ into $\alpha_1$ and $\alpha-\alpha_1$.
Denote by $\theta$ the angle at the intersection of $m$ and $a$, on the side of $\alpha_1$, and (assume) on the 
side of $\beta$.
Thus we have two triangles: one with angles $\alpha_1, \beta, \theta$ and one with 
$\alpha - \alpha_1, \gamma, \pi-\theta$.

By the law of cosines (for the angles) we have
\[
  \sin(\alpha_1) \sin(\beta)\cosh(c) = \cos(\theta) + \cos(\alpha_1)\cos(\beta).
\]  
Combining this with the definition of area bisector 
\[
  2(\pi - \alpha_1 - \theta - \beta) = A \quad \text{i.e.}\quad
  \theta = \pi - \frac{A}{2} - \alpha_1 - \beta,
\]  
we get 
\begin{equation}\label{eq:acosthm}
  \sin(\alpha_1) \sin(\beta)\cosh(c) = - \cos\left(\frac{A}{2} + \alpha_1 + \beta\right) + 
  \cos(\alpha_1)\cos(\beta).
\end{equation}
Using trigonometric formulas, this rewrites as 
\begin{align*}
    \sin(\alpha_1)^{-2}& =1+\frac{1}{\tan(\alpha_1)^2} \\
                       &=\frac{2+\sin(\beta)^2\sinh(c)^2-2\cos\left(\frac{A}{2}\right)+2(1-\cosh(c))\sin(\beta)\sin\left(\frac{A}{2}+\beta\right)}{\left(\cos(\beta)-\cos\left(\frac{A}{2}+\beta\right)\right)^2}.
\end{align*}
Now using the law of cosines again:
\[
    \sin(\alpha) \sin(\beta)\cosh(c)  = \cos(\gamma) + \cos(\alpha)\cos(\beta)
\]
and setting $w_1 =\left(\cos(\beta)-\cos\left(\frac{A}{2}+\beta\right)\right)(\sin(\alpha_1))^{-1}\sin(\alpha)$, 
we get the equation
%\begin{align*}
%w_1^2 =&\sin(\alpha)^2+\cos(\beta)^2\sin(\alpha)^2+(\cos(\alpha)\cos(\beta)\cos(A)-\sin(\alpha)\sin(\beta)\cos(A)\\&-\sin(\alpha)\cos(\beta)\sin(A)-\cos(\alpha)\sin(\beta)\sin(A)-\cos(\alpha)\cos(\beta))^2\\&-2\cos(\beta)\sin(\alpha)^2\left(\cos\left(\frac{A}{2}\right)\cos(\beta)
%-\sin\left(\frac{A}{2}\right)\sin(\beta)
%\right)\\&-2\sin(\alpha)(-(\cos(\alpha)\cos(\beta)\cos(A)-\sin(\alpha)\sin(\beta)\cos(A)\\&-\sin(\alpha)\cos(\beta)\sin(A)-\cos(\alpha)\sin(\beta)\sin(A))+\cos(\alpha)\cos(\beta))\\&\times \left(\sin\left(\frac{A}{2}\right)\cos(\beta)+\cos\left(\frac{A}{2}\right)\sin(\beta)\right).
%\end{align*}
%\mcom{Do we try to write things nicely like in the other section?
\begin{align*}
w_1^2 =&s_\alpha^2+c_\beta^2s_\alpha^2+(c_\alpha c_\beta c_A-s_\alpha s_\beta c_A-s_\alpha c_\beta s_A-c_\alpha s_\beta s_A-c_\alpha c_\beta)^2\\
&-2c_\beta s_\alpha^2\left(\cos\left(\frac{A}{2}\right)c_\beta -\sin\left(\frac{A}{2}\right)s_\beta\right)\\
&-2s_\alpha(-(c_\alpha c_\beta c_A-s_\alpha s_\beta c_A-s_\alpha c_\beta s_A-c_\alpha s_\beta s_A)+c_\alpha c_\beta)\\
&\times \left(\sin\left(\frac{A}{2}\right)c_\beta+\cos\left(\frac{A}{2}\right)s_\beta\right),
\end{align*}
%}
where, as usual, $s_\alpha = \sin(\alpha)$, etc...

Assume now that our triangle has rational angles as well as rational \emph{half-area}.
We apply a similar change of variables as in Section~\ref{sec:heron-angles}, namely:
\[
  \renewcommand{\arraystretch}{1.5}
    \arraycolsep=1.5em
    \begin{array}{lll}
        \cos(\frac{A}{2}) = \frac{1 - n^2}{1+n^2}, & 
        \cos(\beta) = \frac{1-u^2}{1+u^2}, &
        \cos(\alpha) = \frac{1-t^2}{1+t^2}, 
        \\
        \sin(\frac{A}{2}) = \frac{2n}{1+n^2}, &  
        \sin(\beta) = \frac{2u}{1+u^2}, &
        \sin(\alpha) = \frac{2t}{1+t^2}.
  \end{array}
\]
Setting $w=\frac{w_1(n^2+1)^2(t^2+1)(u^2+1)}{4n}$, and clearing squares, we obtain:
{\footnotesize
\begin{align*}
    w^2 =& 4  (n + u)^2  (nu - 1)^2 t^4 + 4  (n + u)  (nu - 1)  (2n^3u + 3n^2u^2 - 3n^2 - 6nu - u^2 + 1) t^3 \\
        & + (n^6u^4 + 2n^6u^2 + 8n^5u^3 + 11n^4u^4 + n^6 - 8n^5u - 50n^4u^2 - 64n^3u^3 \\
        &- 13n^2u^4 + 11n^4 + 64n^3u + 86n^2u^2 + 24nu^3 + u^4 - 13n^2 - 24nu - 6u^2 + 1) t^2 \\
        &-4  (n + u)  (nu - 1)  (2n^3u + 3n^2u^2 - 3n^2 - 6nu - u^2 + 1) t + 4  (n + u)^2  (nu - 1)^2.
\end{align*}
}

We make the following final change of variables:
{\footnotesize
\begin{align*}
    y = &\frac{4(n u - 1) (n + u) }{t^3} \big[  2  (2  n^{3} u + 3  n^{2} u^{2} - 3  n^{2} - 6  n u - u^{2} + 1) (n u - 1) (n + u) t^{3}  \\
        &  + (n^{6} u^{4} + 2  n^{6} u^{2} + 8  n^{5} u^{3} + 11  n^{4} u^{4} + n^{6} - 8  n^{5} u - 50  n^{4} u^{2} -  64  n^{3} u^{3} - 13  n^{2} u^{4} \\
        &+ 11  n^{4} + 64  n^{3} u + 86  n^{2} u^{2} + 24  n u^{3} + u^{4} - 13  n^{2} - 24  n u - 6  u^{2} + 1) t^{2} \\
         &+ 8  (n u - 1)^{2} (n + u)^{2} - 6  (2  n^{3} u + 3  n^{2} u^{2} - 3  n^{2} - 6  n u - u^{2} + 1)(n u - 1) (n + u) t\\
        &  + 4  (n u - 1) (n + u) w - (2  n^{3} u + 3  n^{2} u^{2} - 3  n^{2} - 6  n u - u^{2} + 1)  t w\big],
  %     y = &\frac{4}{t^3} \big[  2  (2  n^{3} u + 3  n^{2} u^{2} - 3  n^{2} - 6  n u - u^{2} + 1) (n u - 1)^{2} (n + u)^{2} t^{3}  \\
    %    &  + (n^{6} u^{4} + 2  n^{6} u^{2} + 8  n^{5} u^{3} + 11  n^{4} u^{4} + n^{6} - 8  n^{5} u - 50  n^{4} u^{2} -  64  n^{3} u^{3} - 13  n^{2} u^{4} \\
     %   &+ 11  n^{4} + 64  n^{3} u + 86  n^{2} u^{2} + 24  n u^{3} + u^{4} - 13  n^{2} - 24  n u - 6  u^{2} + 1)(n u - 1) (n + u) t^{2} \\
       %  &+ 8  (n u - 1)^{3} (n + u)^{3} - 6  (2  n^{3} u + 3  n^{2} u^{2} - 3  n^{2} - 6  n u - u^{2} + 1)(n u - 1)^{2} (n + u)^{2} t\\
       % &  + 4  (n u - 1)^{2} (n + u)^{2} w - (2  n^{3} u + 3  n^{2} u^{2} - 3  n^{2} - 6  n u - u^{2} + 1) (n u - 1) (n + u) t w\big],
\end{align*}
\begin{align*}
    x = & \frac{1}{t^2}\big[8  (n u - 1)^{2} (n + u)^{2} - 4  (2  n^{3} u + 3  n^{2} u^{2} - 3  n^{2} - 6  n u - u^{2} + 1) (n u - 1) (n + u) t\\
        & +(n^{6} u^{4} - 2  n^{6} u^{2} - 4  n^{5} u^{3} + 2  n^{4} u^{4} + n^{6} + 4  n^{5} u - 8  n^{4} u^{2} - 24  n^{3} u^{3} - 7  n^{2} u^{4} + 2  n^{4} + 24  n^{3} u \\
        & +38  n^{2} u^{2} + 12  n u^{3} - 7  n^{2} - 12  n u - 4  u^{2}) t^{2} + 4  (n u - 1) (n + u) w\big],
\end{align*}
}
with inverse
{\footnotesize
\begin{align*}
    t = &-\big[4  (n u - 1) x (n + u)\big]\cdot \big[(2  n^{3} u + 3  n^{2} u^{2} - 3  n^{2} - 6  n u - u^{2} + 1)  (n u^{2} - n - 2  u)^{2} (n^{2} + 1)^{2}\\
 & - (2  n^{3} u + 3  n^{2} u^{2} - 3  n^{2} - 6  n u - u^{2} + 1) x - y\big]^{-1},
\end{align*} 
\begin{align*}
w =&- 2(n u - 1) (n + u) \big[ (2  n^{3} u + 3  n^{2} u^{2} - 3  n^{2} - 6  n u - u^{2} + 1)^{2} (n u^{2} - n - 2  u)^{4} 
    (n^{2} + 1)^{4} \\
   &-2  (2  n^{3} u + 3  n^{2} u^{2} - 3  n^{2} - 6  n u - u^{2} + 1) (n u^{2} - n - 2  u)^{2} (n^{2} + 1)^{2}  y  \\
   & +(2  n^{4} u^{4} - 8  n^{4} u^{2} - 20  n^{3} u^{3} - 7  n^{2} u^{4} + 2  n^{4} + 20  n^{3} u + 34  n^{2} u^{2} + 12  n u^{3} - u^{4} - 7  n^{2}  \\
   &- 12  n u - 6  u^{2} - 1) (n^{2} + 1) x^{2} - 2   x^{3}  +  y^{2} \big]\\
   &\times  \big[(2  n^{3} u + 3  n^{2} u^{2} - 3  n^{2} - 6  n u - u^{2} + 1)^{2} (n u^{2} - n - 2  u)^{4} (n^{2} + 1)^{4}
\\ & -2  (2  n^{3} u + 3  n^{2} u^{2} - 3  n^{2} - 6  n u - u^{2} + 1)^{2} (n u^{2} - n - 2  u)^{2}  (n^{2} + 1)^{2} x
\\ & - 2  (2  n^{3} u + 3  n^{2} u^{2} - 3  n^{2} - 6  n u - u^{2} + 1) (n u^{2} - n - 2  u)^{2} (n^{2} + 1)^{2} y 
\\ & + (2  n^{3} u + 3  n^{2} u^{2} - 3  n^{2} - 6  n u - u^{2} + 1)^{2} x^{2} \\
   & +2  (2  n^{3} u + 3  n^{2} u^{2} - 3  n^{2} - 6  n u - u^{2} + 1) y x + y^{2}\big]^{-1},
%w =&-\big[2  ((2  n^{3} u + 3  n^{2} u^{2} - 3  n^{2} - 6  n u - u^{2} + 1)^{2} (n u^{2} - n - 2  u)^{4} 
 %   (n^{2} + 1)^{4} (n u - 1) (n + u) \\
 %  &-2  (2  n^{3} u + 3  n^{2} u^{2} - 3  n^{2} - 6  n u - u^{2} + 1) (n u^{2} - n - 2  u)^{2} (n^{2} + 1)^{2} (n u - 1) y (n + u)  \\
 %  & +(2  n^{4} u^{4} - 8  n^{4} u^{2} - 20  n^{3} u^{3} - 7  n^{2} u^{4} + 2  n^{4} + 20  n^{3} u + 34  n^{2} u^{2} + 12  n u^{3} - u^{4} - 7  n^{2}  \\
 %  &- 12  n u - 6  u^{2} - 1) (n^{2} + 1) (n u - 1) x^{2} (n + u) - 2  (n u - 1) x^{3} (n + u) + (n u - 1) y^{2} (n + u))\big]\\
 %  &\times  \big[(2  n^{3} u + 3  n^{2} u^{2} - 3  n^{2} - 6  n u - u^{2} + 1)^{2} (n u^{2} - n - 2  u)^{4} (n^{2} + 1)^{4}\\ & -2  (2  n^{3} u + 3  n^{2} u^{2} - 3  n^{2} - 6  n u - u^{2} + 1)^{2} (n u^{2} - n - 2  u)^{2}  (n^{2} + 1)^{2} x\\ & - 2  (2  n^{3} u + 3  n^{2} u^{2} - 3  n^{2} - 6  n u - u^{2} + 1) (n u^{2} - n - 2  u)^{2} (n^{2} + 1)^{2} y \\ & + (2  n^{3} u + 3  n^{2} u^{2} - 3  n^{2} - 6  n u - u^{2} + 1)^{2} x^{2} \\   & +2  (2  n^{3} u + 3  n^{2} u^{2} - 3  n^{2} - 6  n u - u^{2} + 1) y x + y^{2}\big]^{-1},
\end{align*}
}
and we get 
\begin{align}
    \label{eq:area-bisector}
 y^2=&(x-(n^2+1)^2(nu^2-2u-n)^2)(x^2-(n^2+1)(n^4u^4-8n^2u^4-u^4-16n^3u^3\\
 \nonumber   & +16nu^3-6n^4u^2+32n^2u^2- 10u^2+16n^3u-16nu+n^4-8n^2-1)x\\
  \nonumber     & -(n^2+1)^2(nu^2-2u-n)^2(3n^2u^2-u^2+2n^3u-6nu-3n^2+1)^2).
 \end{align}
 
 Let $E_{n,u}$ denote this elliptic curve \eqref{eq:area-bisector}, where $n,u$ are parameters.
 The data it encodes is the following.
 By assumption, $\frac{A}{2},\alpha,\beta,\gamma$ are all rational.
 Moreover, as in Section~\ref{sec:heron-angles}, it follows easily from the law of cosines that $\cosh(a)$, 
 $\cosh(b)$ and $\cosh(c)$ are also rational.
 Now by construction, a rational solution to \eqref{eq:area-bisector} corresponds to a triangle with 
 $\sin(\alpha_1)$ rational.
 In addition, \eqref{eq:acosthm} implies that $\cos(\alpha_1)\in \Q$, and thus $\alpha_1$ is rational.
 Since the area of the small triangle with angles $\alpha_1, \beta, \theta$ is rational (it is $\frac{A}{2}$),
 it follows that $\theta$ is also a rational angle.

 What is not encoded by the curve $E_{n,u}$ is the rationality of $\sinh(m)$.
 It is easy to see that this is actually equivalent to the original triangle being Heron: since all the 
 angles and areas under consideration are rational, we have $\sinh(m) \in \Q$ if and only if the small 
 triangle (with angles $\alpha_1, \beta, \theta$) is Heron (as explained in Section~\ref{sec:heron-angles}).
 Since $c$ is a side of this triangle, this happens exactly when $\sinh(c) \in \Q$, which, in turn, is true 
 if and only if the original triangle is Heron.

 Therefore, we have proven: 
 \begin{theorem}
     A hyperbolic Heron triangle has one rational area bisector if and only if it corresponds (using the 
     above change of variables) to a rational point of $E_{n,u}$.
 \end{theorem}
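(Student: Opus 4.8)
The plan is to recapitulate the construction preceding the statement and to verify that it yields a genuine bijection. The first step is purely computational: one checks that the displayed change of variables $(t,w)\leftrightarrow(x,y)$ and its stated inverse are mutually inverse rational maps with coefficients in $\Q(n,u)$, so that for $n,u\in\Q$ they restrict to a bijection between the $\Q$-points of $E_{n,u}$ lying off the exceptional divisor and the rational solutions $(t,w)$ of the quartic lying off the corresponding divisor; in particular the correspondence preserves the field of definition. After that, everything reduces to translating, via the hyperbolic trigonometric laws, between ``rational point of $E_{n,u}$'' and ``hyperbolic Heron triangle with a rational area bisector''.

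For the easy direction I would start from a rational point of $E_{n,u}$: it produces rational $t,w$, and from $\cos\alpha=\frac{1-t^{2}}{1+t^{2}}$, $\cos\beta=\frac{1-u^{2}}{1+u^{2}}$, $\cos\frac{A}{2}=\frac{1-n^{2}}{1+n^{2}}$ (and the corresponding sines) one gets rational angles $\alpha,\beta$, rational half-area $\frac{A}{2}$, hence rational area $A$ and rational angle $\gamma=\pi-\alpha-\beta-A$; the law of cosines for angles then forces $\cosh a,\cosh b,\cosh c\in\Q$, while rationality of $w$ is, by the definition of $w_1$, precisely rationality of $\sin\alpha_1$. Rewriting \eqref{eq:acosthm} as
\[
\cos\alpha_1\left(\cos\left(\frac{A}{2}+\beta\right)-\cos\beta\right)=\sin\alpha_1\left(\sin\left(\frac{A}{2}+\beta\right)-\sin\beta\cosh c\right)
\]
shows $\cos\alpha_1\in\Q$, so $\alpha_1$ is a rational angle, and then $\theta=\pi-\frac{A}{2}-\alpha_1-\beta$ is one too. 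Now the sub-triangle with angles $\alpha_1,\beta,\theta$ has rational angles, so by the elementary argument of Section~\ref{sec:heron-angles} the $\cosh$ of each of its sides --- in particular $\cosh m$ --- is rational, and by the law of sines $\sinh$ of one of its sides is rational if and only if all are; hence this sub-triangle is Heron iff $\sinh m\in\Q$ iff $\sinh c\in\Q$. Applying the same reasoning to the original triangle (which has rational angles $\alpha,\beta,\gamma$) shows it is Heron iff $\sinh c\in\Q$. Since $\cosh m\in\Q$ already, $\sinh m\in\Q$ is equivalent to $e^{m}\in\Q$, i.e.\ to the area bisector being rational; combining the equivalences, the triangle attached to a rational point of $E_{n,u}$ is Heron exactly when its area bisector is rational.

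For the converse one must check that a hyperbolic Heron triangle with a rational area bisector $m$ from $\alpha$ actually lands in the parametrized family, i.e.\ that its half-area is a rational angle and $\sin\alpha_1\in\Q$ --- for then $n,t,u,w$ are all rational and the corresponding point of $E_{n,u}$ is rational off the exceptional locus. This is the delicate point and, I expect, the main obstacle. Here $e^{a},e^{b},e^{c},e^{m}\in\Q$ and $\alpha,\beta,\gamma$ are rational; writing \eqref{eq:acosthm} together with its analogue for the second sub-triangle (obtained by $\alpha_1\mapsto\alpha-\alpha_1$, $\beta\mapsto\gamma$, $c\mapsto b$) and eliminating $e^{i\alpha_1}$ between the two while using $e^{iA}\in\Q[i]$, one solves for $e^{iA/2}$ and finds it in $\Q[i]$, provided one avoids the locus where the relevant denominator vanishes; thus $\frac{A}{2}$ is a rational angle. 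Once this is known, the law of cosines for the sides in the sub-triangle applied to the angle opposite $m$, combined with $\cosh^{2}a_1-\sinh^{2}a_1=1$, forces $\cosh a_1,\sinh a_1\in\Q$ away from $\beta=\frac{\pi}{2}$ and the remaining degenerate cases, whence $\sin\alpha_1=\frac{\sin\beta}{\sinh m}\sinh a_1\in\Q$. The last ingredient --- exhibiting the open-condition inequalities that encode positivity of $\alpha_1$, $\alpha-\alpha_1$ and $\theta$ (equivalently, the area bisector meeting side $a$ in its interior), and confirming that the change of variables is an isomorphism away from those loci and the exceptional divisor --- is routine, exactly as in the proofs of Theorems~\ref{th:heron-angles} and \ref{th:heron-sides}, and is where most of the computation sits. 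So the only conceptually non-trivial step is the elimination giving $e^{iA/2}\in\Q[i]$ and the bookkeeping of the exceptional loci; the rest is a translation through the hyperbolic laws of sines and cosines.
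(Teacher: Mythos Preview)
Your argument for the direction ``rational point gives rational $\alpha_1,\theta,\cosh m$, and then Heron $\Leftrightarrow$ rational area bisector'' follows the paper exactly: the paper's entire proof is the two paragraphs preceding the theorem, which observe that a rational point yields $\sin\alpha_1\in\Q$, that \eqref{eq:acosthm} then gives $\cos\alpha_1\in\Q$, and that the Section~\ref{sec:heron-angles} argument applied to the sub-triangle with angles $\alpha_1,\beta,\theta$ shows $\sinh m\in\Q\Leftrightarrow\sinh c\in\Q\Leftrightarrow$ the original triangle is Heron.

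Where you go beyond the paper is in the converse. The paper works throughout Section~\ref{sec:area-bisectors} under the standing hypothesis ``rational angles as well as rational \emph{half-area}'', so that $n\in\Q$ is given and the phrase ``corresponds (using the above change of variables) to a rational point of $E_{n,u}$'' already presupposes it; no derivation of $e^{iA/2}\in\Q[i]$ is attempted or needed. You instead try to deduce $e^{iA/2}\in\Q[i]$ and $\sinh a_1\in\Q$ from ``Heron with rational area bisector'' alone. The second of these has a gap as written: a single side-law-of-cosines at $\beta$ gives one linear relation $\cosh c\,\cosh a_1-\sinh c\cos\beta\,\sinh a_1=\cosh m$, and together with $\cosh^2 a_1-\sinh^2 a_1=1$ this determines $(\cosh a_1,\sinh a_1)$ only up to a quadratic over $\Q$, not in $\Q$ itself. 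Worse, the companion relation from the other sub-triangle, once one substitutes $a_2=a-a_1$ and uses the law of cosines in the big triangle, collapses to the \emph{same} line, so it does not supply the missing second equation. Thus neither $\sinh a_1\in\Q$ nor, upstream, the elimination giving $e^{iA/2}\in\Q[i]$ is actually established by your sketch --- though neither is needed for the theorem as the paper states and proves it.
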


  This curve is more complicated than the ones of Section~\ref{sec:heron-angles} and \ref{sec:heron-sides}.
  Yet we have the following lemma, analog to Lemma~\ref{lem:rank-medians}.
  As before, we let $E_n$ denote the curve $E_{n,u}$ seen over $\C(u)$.
  \begin{lemma} \label{lem:rank-computations-area-bisectors}
    The rank of the surface $E_n$ satisfies 
    \[
        1 \leq \rk(E_n(\C(u))) \leq 4.
    \]
    Moreover, the point 
    \[
    Q(n,u)=\Big( 0,(n^2+1)^2(nu^2-2u-n)^2(3n^2u^2-u^2+2n^3u-6nu-3n^2+1)\Big)
    \]
    is a point of infinite order on the surface $E_n$, 
    and its torsion is either $\Z/2\Z, \Z/4\Z,$ $\Z/2\Z\times \Z/2\Z,$ or $\Z/4\Z\times \Z/2\Z$.
  \end{lemma}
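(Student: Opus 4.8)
The plan is to follow the same three-step recipe used for Lemmas~\ref{lem:rank-computations}, \ref{lem:rank-computations-sides} and \ref{lem:rank-medians}: establish the lower bound via an explicit point, the upper bound via Tate's algorithm together with the Shioda--Tate formula, and the torsion statement by combining these with the Miranda--Persson classification.

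For the lower bound, I would first observe that substituting $x = 0$ into the right-hand side of \eqref{eq:area-bisector} produces exactly the square of the $y$-coordinate of $Q(n,u)$ (the linear factor contributes $-(n^2+1)^2(nu^2-2u-n)^2$ and the constant term of the quadratic factor contributes $-(n^2+1)^2(nu^2-2u-n)^2(3n^2u^2-u^2+2n^3u-6nu-3n^2+1)^2$, and their product is a square), so $Q(n,u) \in E_n$. To see that $Q$ has infinite order I would either compute a few multiples $2Q, 3Q, \dots$ and note that the degrees in $u$ of their coordinates strictly grow, or — more cleanly — specialise $n$ to a convenient rational value and verify on the resulting curve (over $\Q(u)$, or further over $\Q$) that the specialised point is non-torsion, e.g.\ by exhibiting a positive canonical height; since specialisation is injective modulo torsion for all but finitely many fibres, this forces $Q$ to be non-torsion on $E_n(\C(u))$. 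Equivalently, one checks that the height pairing on the Mordell--Weil lattice of $E_n$ assigns $Q$ a positive height, which simultaneously gives $\rk(E_n(\C(u))) \geq 1$.

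For the upper bound, I would compute the discriminant of \eqref{eq:area-bisector} as a polynomial in $u$ with coefficients in $\C[n]$, confirm that its degree (counted with the fibre at infinity) is $24$ so that $E_n$ is an elliptic $K3$ surface, and then run Tate's algorithm \cite[IV.9]{Silverman} at each singular fibre to read off its Kodaira type and number of components $m_v$. By analogy with Lemma~\ref{lem:rank-medians} I expect the fibres over $u = 0$ and $u = \infty$ to be highly degenerate multiplicative fibres $I_k$ with $k$ large, while the other zeros of the discriminant give fibres with few components, so that $\sum_v (m_v - 1) \geq 14$. The Shioda--Tate formula \cite[Corollary~1.5]{Shioda} then gives $\rho(E_n) = \rk(E_n(\C(u))) + 2 + \sum_v (m_v - 1)$, and $\rho(E_n) \leq 20$ for a $K3$ surface forces $\rk(E_n(\C(u))) \leq 4$.

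Finally, for the torsion: the point $\bigl((n^2+1)^2(nu^2-2u-n)^2,\,0\bigr)$ is visibly a point of order $2$, since it kills the linear factor in \eqref{eq:area-bisector}, and whether there is full $2$-torsion is governed by whether the quadratic factor splits over $\C(u)$. Combining the Euler characteristic $\chi = 2$ of the $K3$ surface, the rank bound just obtained, and the list of admissible configurations in Table~(4.5) of \cite{MirandaPersson} restricts the torsion subgroup of $E_n(\C(u))$ to the four groups listed. The main obstacle is the middle step: the discriminant of \eqref{eq:area-bisector} is a large polynomial, and correctly identifying the Kodaira type at each of its zeros — in particular distinguishing additive reduction ($I_k^*$, $\mathrm{III}$, $\mathrm{IV}$, \dots) from multiplicative reduction ($I_k$) and pinning down the component counts exactly — is the delicate part on which the bound $\leq 4$, and hence the torsion classification, depends; everything else is a direct if lengthy computation best carried out in a computer algebra system.
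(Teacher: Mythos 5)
Your plan is the same three-step recipe the paper uses (explicit point $Q$ for the lower bound, Tate's algorithm plus Shioda--Tate plus $\rho\leq 20$ for the upper bound, the visible $2$-torsion point plus the Miranda--Persson table for the torsion), and the parts you actually carry out are fine: the check that $x=0$ makes the right-hand side of \eqref{eq:area-bisector} the square of the stated $y$-coordinate is correct, and specialization (or a positive height in the Mordell--Weil lattice) is a legitimate way to certify that $Q$ is non-torsion, which is all the paper does (``verified by direct computation'').

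The genuine gap is in the upper bound, which you leave as an expectation rather than a computation, and your guess at the fibre configuration is wrong. The discriminant recorded after \eqref{eq:area-bisector} is, up to the constant $(n^2+1)^8$, the product $(u+n)^4(nu-1)^4(u^2+1)^2\bigl((u^2-1)n-2u\bigr)^4$ times an irreducible quartic in $u$; its $u$-degree is already $4+4+4+8+4=24$, so there is no singular fibre at $u=\infty$, and $u=0$ is not a zero at all. The actual configuration (as in the paper) is four fibres of type $I_4$ (at $u=-n$, $u=1/n$, and the two roots of $nu^2-2u-n$), two of type $I_2$ (at $u=\pm i$), and four of type $I_1$, so that $\sum_v(m_v-1)=4\cdot 3+2\cdot 1=14$ and $\rk(E_n(\C(u)))\leq 20-2-14=4$. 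Note that $14$ is exactly the threshold you need: if even one of the $I_4$ fibres were instead, say, $I_2$ or $I_1$, the bound $\leq 4$ would fail, so the Kodaira types must genuinely be determined (e.g.\ by Tate's algorithm on each factor of the discriminant, checking the order of vanishing of $c_4$ to rule out additive types), not inferred ``by analogy with Lemma~\ref{lem:rank-medians}'' --- indeed the analogy would mislead you here, since in that lemma the big fibres sit at $w=0,\infty$, whereas here they sit at the four roots of $(u+n)(nu-1)(nu^2-2u-n)$. Once that computation is done, your torsion argument (order-$2$ point $\bigl((n^2+1)^2(nu^2-2u-n)^2,0\bigr)$, Euler characteristic $\chi=2$, rank bound, Miranda--Persson) matches the paper and closes the proof.
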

  \begin{proof}
      It is not hard to see that $E_n$ is a $K3$ surface.
      Its discriminant is 
\begin{align*}
&2^{12}(n^2+1)^8(u+n)^4(nu-1)^4(u^2+1)^2((u^2-1)n-2u)^4\\
&\times \Big((n^4 + 18n^2 + 1)u^4+16n(n^2 - 3)u^3\\&+
2(n^4 - 30n^2 + 17)u^2-16n(n^2-3)u+n^4 + 18n^2 + 1\Big)
\end{align*}
We have singularities at $u=-n$, $\frac{1}{u}$, and the roots of $nu^2-2u-n$ of type $I_4$,  $\pm i$ of type $I_2$, and the roots of the last factor of type $I_1$. By the Shioda--Tate formula, 
\[\rho(E_n)=\mathrm{rk}(E_n(\C(u)))+2+4\cdot (4-1)+2\cdot(2-1)+4\cdot(1-1)=\mathrm{rk}(E_n(\C(u)))+16.\]
Thus the rank is $\leq 4$. 

The lower bound now follows from the fact that $Q(n,u)$ is of infinite order, which can be verified by 
direct computation.

Finally, it is immediate to see that $\left((n^2+1)^2(nu^2-2u-n)^2,0\right)$ is a point of order 2. 
We can conclude that the torsion is either $\Z/2\Z, \Z/4\Z, \Z/2\Z\times \Z/2\Z,$ or \linebreak 
$\Z/4\Z\times \Z/2\Z$. 
  \end{proof}

%{\small
\nocite{*}
\bibliographystyle{abbrv}
\bibliography{bibli} 
%}

\end{document}